\renewcommand{\Re}{\operatorname{Re}}
\renewcommand{\(}{\left\(}
\renewcommand{\)}{\right\)}
\renewcommand{\[}{\left\[}
\renewcommand{\]}{\right\]}
\newcommand{\Z}{\mathbb{Z}}
\newcommand{\K}{\mathbb{K}}
\newcommand{\Q}{\mathbb{Q}}
\newcommand{\N}{\mathbb{N}}
\numberwithin{equation}{section}
\theoremstyle{plain}
\newtheorem{theorem}{Theorem}[section]
\newtheorem{lemma}[theorem]{Lemma}
\newtheorem{corollary}[theorem]{Corollary}
\theoremstyle{remark}
\newtheorem*{remark}{{\bf Remark}}
\def\proof{\@ifnextchar[{\@oproof}{\@nproof}}
\def\@oproof[#1][#2]{\trivlist\item[\hskip\labelsep\textbf{#2 Proof of\
#1.}~]\ignorespaces}
\def\@nproof{\trivlist\item[\hskip\labelsep\textit{Proof.}~]\ignorespaces}
\def\@tocline#1#2#3#4#5#6#7{\relax
  \ifnum #1>\c@tocdepth 
  \else
    \par \addpenalty\@secpenalty\addvspace{#2}%
    \begingroup \hyphenpenalty\@M
    \@ifempty{#4}{%
      \@tempdima\csname r@tocindent\number#1\endcsname\relax
    }{%
      \@tempdima#4\relax
    }%
    \parindent\z@ \leftskip#3\relax \advance\leftskip\@tempdima\relax
    \rightskip\@pnumwidth plus4em \parfillskip-\@pnumwidth
    #5\leavevmode\hskip-\@tempdima
      \ifcase #1
       \or\or \hskip 1em \or \hskip 2em \else \hskip 3em \fi%
      #6\nobreak\relax
    \dotfill\hbox to\@pnumwidth{\@tocpagenum{#7}}\par
    \nobreak
    \endgroup
  \fi}
\DeclarePairedDelimiterX\MeijerM[3]{\lparen}{\rparen}%
{\begin{smallmatrix}#1 \\ #2\end{smallmatrix}\delimsize\vert\,#3}
\newcommand\MeijerG[8][]{%
  G^{\,#2,#3}_{#4,#5}\MeijerM[#1]{#6}{#7}{#8}}
\newcommand\MeijerG*[7]{%
  G^{\,#1,#2}_{#3,#4}\MeijerM*{#5}{#6}{#7}}
\numberwithin{theorem}{section}
\numberwithin{equation}{section}
\begin{document}
\title[]{A note on odd zeta values over any number field and Extended Eisenstein series}
\author{Soumyarup Banerjee, Rajat Gupta and Rahul Kumar}\thanks{2010 \textit{Mathematics Subject Classification.} Primary 11M06, 11R42, 33E20; Secondary 33C10.\\
\textit{Keywords and phrases.} Odd zeta values, Dedekind zeta function, Ramanujan's formula, Extended Eisenstein series, Number field}
\address{Department of Mathematics and Statistics, Indian Institute of Science Education and Research Kolkata, Mohanpur-741246, West Bengal, India.}
\email{soumya.tatan@gmail.com}

\address{Department of Mathematics, University of Texas at Tyler, TX 75799, U.S.A.}\email{rgupta@uttyler.edu}

\address{Department of Mathematics, The Pennsylvania State University, University Park, PA 16802, U.S.A.}
\email{rjk6031@psu.edu}

\begin{abstract}
In this article, we have studied transformation formulas of zeta function at odd integers over an arbitrary number field which in turn generalizes Ramanujan's identity for the Riemann zeta function. The above transformation leads to a new number field extension of Eisenstein series, which satisfies the transformation $z \mapsto -1/z$ like an integral weight modular form over SL$_2(\Z)$. The results provide number of important applications, which are important in studying the behaviour of odd zeta values as well as Lambert series in an arbitrary number field.
\end{abstract}
\maketitle
\vspace{-0.8cm}
\section{Introduction}
One of the central objects in analytic number theory is the  Riemann zeta function $\zeta(s)$, which plays a crucial role in studying the distribution of primes and has applications in various branches of physics, probability theory, and applied statistics. Over the years, the zeta values and its arithmetic behaviour have been studied extensively by many mathematicians. Euler discovered the zeta values at positive even integral arguments in 1734, which precisely states that for any natural number $m$,
\begin{equation}
\zeta(2m) = (-1)^{m+1} \frac{(2\pi)^{2m} B_{2m}}{2(2m)!}, \label{Euler's formula}
\end{equation}
where $B_m$ denotes the $m$-th Bernoulli number. Thus, it follows from the above formula that the zeta values at any positive even integer are transcendental due to the well-known fact that $\pi$ is transcendental and $B_m$ is rational.

The arithmetic nature of $\zeta(s)$ at odd integers is still mysterious. Ap{\'e}ry \cite{Apery1}, \cite{Apery2} established the irrationality of $\zeta(3)$. Recently, Zudilin \cite{Zudilin} has shown that at least one of the four numbers $\zeta(5)$, $\zeta(7)$, $\zeta(9)$, $\zeta(11)$ is irrational. In this direction, a celebrated result was obtained by Lerch \cite{lerch}, which precisely states that, for $m\geq0$,
\begin{align}\label{lerch}
\zeta(4m+3)=2^{4m+2}\pi^{4m+3}\sum_{j=0}^{2m+2}(-1)^{j+1}\frac{B_{2j}B_{4m+4-2j}}{(2j)!(4m+4-2j)!}-2\sum_{n=1}^\infty\frac{n^{-4m-3}}{e^{2\pi n}-1}.
\end{align}
Since the infinite series is rapidly convergent in the above formula, Berndt and Straub \cite{BS} state that ``\emph{$\zeta(4m+3)$ is almost a rational multiple of $\pi^{4m+3}$}".

Ramanujan \cite[p.~173, Ch.~14, Entry~21(i)]{Ramanujan} obtained an elegant generalization of Lerch identity \eqref{lerch}, namely,  for any non-zero integer $m$ and $\alpha, \beta >0$ with $\alpha \beta = \pi^2$, 
\begin{align}\label{Ramanujan formula}
\alpha^{-m}\left\lbrace \frac{1}{2}\zeta(2m+1) + \sum_{n=1}^\infty\frac{n^{-2m-1}}{e^{2n\alpha} - 1} \right\rbrace
&= (-\beta)^{-m}\left\lbrace \frac{1}{2}\zeta(2m+1) + \sum_{n=1}^\infty\frac{n^{-2m-1}}{e^{2n\beta} - 1} \right\rbrace \nonumber\\
&\quad+\frac{1}{\pi^{2m+2}}\sum_{j=0}^{m+1}(-1)^{m+j}\zeta(2j)\zeta(2m-2j+2)\alpha^{m+1-j}\beta^j.
\end{align}
The above identity encodes even integral weight Eisenstein series transformation $z \mapsto -1/z$ over the full modular group as well as the Eichler integrals and has some important applications in theoretical computer science [cf. \cite{KP}]. Ramanujan's identity \eqref{Ramanujan formula} has been generalized in different directions (cf. \cite{brad}, \cite{dg}, \cite{DGKM}, \cite{dkk1}, \cite{dk}, \cite{DM}, \cite{mg} and \cite{komori}). 

The zeta values at even integers over any number field have been studied extensively. Klingen \cite{Klingen} and Siegel \cite{Siegel} generalized the result \eqref{Euler's formula} of Euler for any totally real number field. The result precisely states as for any totally real number field $\K$ of degree $d$ with discriminant $D_\K$,  
\begin{equation}\label{KlingenSiegel}
\zeta_\K(2m) = \frac{q_m\pi^{2md}}{\sqrt{D_K}}\qquad (m \in \N ),
\end{equation} 
where $q_m$ is some fixed non-zero rational number. 
Zagier \cite{Zagier} obtained a closed form expression for $\zeta_\K(2)$ over any number field $\K$. Here, we investigate transformation formula for odd zeta values over  an arbitrary number field, which in turn, generalizes \eqref{Ramanujan formula}. Before stating the new transformation formula, we define the Dedekind zeta function. 

Let $\K$ be any number field with extension degree $[\K : \Q] = d$ and signature $(r_1, r_2)$ (i. e., $d = r_1+2r_2$) and $D_\K$ denotes the absolute value of the discriminant of $\K$. Let $\mathcal{O}_\K$ be its ring of integers and $\mathfrak{N}$ be the norm map of $\K$ over $\Q$. Then the \begin{it}Dedekind zeta function\end{it} attached to number field $\K$ is defined by 
$$
\zeta_\K(s) :=\sum_{\mathsf{a}\subset\mathcal{O}_\K}\frac{1}{\mathfrak{N}(\mathsf{a})^s},
$$
for $\mathrm{Re}(s)>1$, where $\mathsf{a}$  runs over the non-zero integral ideals  of $\mathcal{O}_\K$.
If $V_\K(m)$ counts the number of non-zero integral ideals in $\mathcal{O}_\K$ with norm $m$, then $\zeta_\K(s)$ can also be written as 
\begin{align}\label{dedekind defn}
\zeta_\K(s)=\sum_{m=1}^\infty \frac{V_\K(m)}{m^s},\qquad(\mathrm{Re}(s)>1).
\end{align}
The function $\zeta_\K(s)$ extends analytically to the entire complex plane except for a simple pole at $s = 1$. We denote the residue at $s = 1$ by $H$, which can be expressed by the analytic class number formula,
\begin{equation*}
H = \frac{2^{r_1} (2\pi)^{r_2} h R}{w \sqrt{D_\K}},
\end{equation*}
where $h$ denotes the class number, $R$ is the regulator and $w$ is the number of roots of unity in $\K$. 
Throughout, we will stick with the above notations for an arbitrary number field $\K$.


Ramanujan's identity \eqref{Ramanujan formula} mainly involves the Lambert series
\begin{equation}\label{Lambertseries}
\sum_{n=1}^\infty \frac{n^a}{e^{ny}-1} = \sum_{n=1}^\infty \sigma_a(n) e^{-ny},
\end{equation}
where $\sigma_a(n) := \sum_{d \mid n} d^a$. The above series plays a crucial role in the theory of modular forms as for $a = 2m-1$ with $m\in \N$ and $y = -2 \pi iz$ either of the above series is  essentially an Eisenstein series of weight $2m$ over the full modular group. Here, the case $m=1$ is  Eisenstein series of weight $2$, which is not exactly the modular form but the quasi-modular form. The series for $a = -2m-1$ with $m\in \N$ and $y = -2 \pi iz$ represents the Eichler integral corresponding to the weight $2m$ Eisenstein series.

Here, we consider an infinite series involving Meijer $G$-function [cf. \S \ref{pr}], which generalizes the above Lambert series \eqref{Lambertseries} for an arbitrary number field $\K$ as
\begin{equation}\label{Lamber series numberfield}
\sum_{\mathsf{a}\subset\mathcal{O}_\K} \mathfrak{N}(\mathsf{a})^a \Omega_\K \left(\frac{\mathfrak{N}(\mathsf{a}) y}{D_{\K}}\right)= \sum_{n=1}^\infty V_\K(n)n^{a} \Omega_\K\left(\frac{n y}{D_\K}\right)
\end{equation}
where,
\begin{align}\label{omega definition}
\Omega_\K(x):=\frac{2^{1-r_1-r_2}}{\pi^{1-\frac{r_1}{2}}}\sum_{j=1}^\infty V_\K(j)\MeijerG*{d+1}{0}{0}{2d}{-}{(0)_{r_1+r_2},\left(\frac{1}{2}\right)_{r_2+1};\left(\frac{1}{2}\right)_{r_1+r_2-1},(0)_{r_2}}{\frac{x^2j^2}{4^d}}.
\end{align}
In particular, for $\K= \Q$, $\Omega_\K(x)$ reduces to $\frac{1}{e^x-1}$, which has been derived in \eqref{OmegaQ}.

In the following theorem we obtain a transformation formula of Dedekind zeta function at odd integers over an arbitrary number field $\K$, which provides \eqref{Ramanujan formula} as a special case when $\K = \Q$.

\begin{theorem}\label{gen formula}
Let $\K$ be an arbitrary number field. For any non-zero integer $m$ and  $\mathrm{Re}(\alpha), \mathrm{Re}(\beta)>0$ with $\alpha\beta=\pi^{2d}$, the following identity holds
\begin{align}\label{gen formula eqn}
&\alpha^{-m}\left\{\frac{H}{2}\zeta_\K(2m+1)+\mathcal{C_\K}\sum_{n=1}^\infty\frac{V_\K(n)}{n^{2m+1}}\Omega_\K\left(\frac{2^d n \alpha}{D_\K}\right)\right\}\nonumber\\
&=(-\beta)^{-m}\left\{\frac{H}{2}\zeta_\K(2m+1)+\mathcal{C_\K}\sum_{n=1}^\infty\frac{V_\K(n)}{n^{2m+1}}\Omega_\K\left(\frac{2^d n \beta}{D_\K}\right)\right\}\nonumber\\
&\quad+\frac{1}{\pi^{(2m+1)d+1}}\sum_{j=0}^{m+1}(-1)^{m+j}\zeta_\K(2j)\zeta_\K(2m-2j+2)\alpha^{m+1-j}\beta^j.
\end{align}
where $ \mathcal{C_\K}=\frac{2^{r_1-1}(2\pi)^{r_2}}{\sqrt{D_\K}}$.
\end{theorem}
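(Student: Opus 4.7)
The plan is to prove Theorem \ref{gen formula} by a Mellin--Barnes contour-shift argument against the Dedekind zeta function, paralleling the classical derivation of Ramanujan's formula \eqref{Ramanujan formula} but with $(e^y-1)^{-1}$ replaced by the Meijer $G$-kernel defining $\Omega_\K$. The two key inputs are the functional equation $\Lambda_\K(s)=\Lambda_\K(1-s)$ of the completed Dedekind zeta function
$$\Lambda_\K(s):=|D_\K|^{s/2}\bigl(\pi^{-s/2}\G(s/2)\bigr)^{r_1}\bigl(2(2\pi)^{-s}\G(s)\bigr)^{r_2}\zeta_\K(s),$$
and the Mellin transform $\mathcal M(s):=\int_0^\infty x^{s-1}\Omega_\K(x)\,dx$. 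Using \eqref{omega definition}, the standard Mellin formula for $G^{d+1,0}_{0,2d}$, and the substitution $u=x^2j^2/4^d$, one obtains
$$\mathcal M(s)=(\mathrm{const})\,(2^d)^s\,\zeta_\K(s)\cdot\frac{\G(s/2)^{r_1+r_2}\,\G\!\bigl(\tfrac{s+1}{2}\bigr)^{r_2+1}}{\G\!\bigl(\tfrac{1-s}{2}\bigr)^{r_1+r_2-1}\G(-s/2)^{r_2}}.$$
Applying Legendre duplication $\G(z)\G(z+\tfrac12)=2^{1-2z}\sqrt{\pi}\,\G(2z)$ to $r_2$ numerator pairs and $r_2$ denominator pairs, and then the reflection formula $\G(z)\G(1-z)=\pi/\sin(\pi z)$ to eliminate the remaining factors with negative arguments, collapses $\mathcal M(s)$ exactly to the $\G$-factor of $\Lambda_\K(s)$ times $\zeta_\K(s)$, multiplied by elementary prefactors of $2$, $\pi$, and $|D_\K|$. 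The normalizing constant $\mathcal C_\K$ in the statement is chosen precisely so that these prefactors match cleanly.

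Substituting the Mellin inversion of $\Omega_\K$ into the Lambert-type sum on the left of \eqref{gen formula eqn} and interchanging sum and integral then yields, for $c>1$,
$$\mathcal C_\K\sum_{n=1}^\infty\frac{V_\K(n)}{n^{2m+1}}\Omega_\K\!\left(\tfrac{2^d n\alpha}{D_\K}\right)=\frac{1}{2\pi i}\int_{(c)}\Phi(s)\,\zeta_\K(s)\,\zeta_\K(s+2m+1)\,\alpha^{-s}\,ds,$$
where $\Phi(s)$ collects the $\G$-quotient from Step~1 together with the elementary constants. Shifting the line of integration to $\Re(s)=c'<-2m-1$ picks up residues at $s=1$ (simple pole of $\zeta_\K(s)$), at $s=-2m$ (simple pole of $\zeta_\K(s+2m+1)$, producing the $\tfrac H2\zeta_\K(2m+1)\alpha^{-m}$ term after being paired with $\alpha^{2m}$ and rescaled using $\alpha\beta=\pi^{2d}$), and at $s=0,-2,\ldots,-2m$, where $\G$-poles of $\Phi(s)$ combine with the trivial zeros of $\zeta_\K(s)$ (of order $r_1+r_2$ at each negative even integer). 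Collecting these residues produces exactly the finite symmetric sum $\frac{1}{\pi^{(2m+1)d+1}}\sum_{j=0}^{m+1}(-1)^{m+j}\zeta_\K(2j)\zeta_\K(2m-2j+2)\alpha^{m+1-j}\beta^j$.

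Finally, on the shifted line I apply $\Lambda_\K(s)=\Lambda_\K(1-s)$ to both zeta factors and change variables $s\mapsto -2m-s$. The $\G$-quotient in $\Phi$ reassembles into the same structure in the new variable -- this is the content of having built the functional equation into the Mellin transform $\mathcal M$ in Step~1 -- and $\alpha^{-s}$ becomes a power of $\beta$ through $\alpha\beta=\pi^{2d}$, so the transformed integral is the ``$\beta$-side'' Mellin--Barnes representation of the right-hand side of \eqref{gen formula eqn}, with overall factor $(-\beta)^{-m}$. The main obstacle is Step~1: verifying that the signature-specific parameter pattern $(0)_{r_1+r_2},(\tfrac12)_{r_2+1};(\tfrac12)_{r_1+r_2-1},(0)_{r_2}$ in \eqref{omega definition} really collapses under duplication and reflection to the correct gamma factor of $\Lambda_\K(s)$ with the correct prefactors. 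A close second is the residue bookkeeping at $s=0,-2,\ldots,-2m$, where trivial zeros of $\zeta_\K$ partially cancel $\G$-poles of $\Phi(s)$ and only the stated symmetric pattern $\zeta_\K(2j)\zeta_\K(2m+2-2j)$ survives after using $\alpha\beta=\pi^{2d}$.
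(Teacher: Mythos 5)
Your overall strategy is the same as the paper's: compute the Mellin transform of $\Omega_\K$, represent the Lambert-type sum as a Mellin--Barnes integral in $\zeta_\K(s)\zeta_\K(2m+1+s)$, shift the contour, apply the functional equation, and change variables $s\mapsto -2m-s$ to recover the $\beta$-side. However, two of your concrete steps fail as stated. First, the Mellin transform of $\Omega_\K$ does \emph{not} collapse to the gamma factor of the completed zeta function $\Lambda_\K(s)$ times $\zeta_\K(s)$ up to elementary prefactors. The correct evaluation (the paper's Lemma \ref{inverse mellin transform of omega}) is
\begin{equation*}
\mathcal M(s)=\frac{\Gamma(s)^{r_1+r_2}}{\Gamma(1-s)^{r_2}}\cos\left(\frac{\pi s}{2}\right)^{r_1-1}\zeta_\K(s),
\end{equation*}
and already for $\K=\Q$ this is $\Gamma(s)\zeta(s)$, which is not an elementary multiple of $\pi^{-s/2}\Gamma(s/2)\zeta(s)$: duplication leaves behind the non-elementary factor $\Gamma\left(\frac{s+1}{2}\right)$. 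What \emph{is} true, and what actually drives the proof, is that the asymmetric functional equation converts this expression into $(\text{elementary in }s)\cdot\zeta_\K(1-s)/\cos\left(\frac{\pi s}{2}\right)$; it is this form, with a single explicit $1/\cos$ and no residual gamma functions, in which the contour shift is transparent.

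Second, and more seriously, your residue bookkeeping is wrong, and the error is not repairable by adjusting constants. If the integrand really were $\Lambda_\K(s)\zeta_\K(2m+1+s)\alpha^{-s}$, it would have \emph{no} poles at $s=-2,-4,\dots,-2m+2$: at each negative even integer the order-$(r_1+r_2)$ gamma pole is exactly cancelled by the order-$(r_1+r_2)$ trivial zero of $\zeta_\K$, so nothing is picked up there; and even if poles did occur at $s=-2j$, the second factor would contribute the \emph{odd} values $\zeta_\K(2m+1-2j)$, not the even values $\zeta_\K(2m-2j+2)$ appearing in the finite sum. In the correct accounting the $m+2$ terms of the symmetric sum come from simple poles at the \emph{odd} integers $s=1-2k$, $k=0,1,\dots,m+1$, produced by $1/\cos\left(\frac{\pi s}{2}\right)$ in the functional-equation form of the integrand (equivalently, at negative odd integers the order-$(r_1+r_2)$ pole of $\Gamma(s)^{r_1+r_2}$ beats the order-$(r_1-1)$ zero of the cosine power and the order-$r_2$ trivial zero of $\zeta_\K(s)$, leaving a net simple pole), and there both zeta factors are evaluated at even arguments, $\zeta_\K(2k)$ and $\zeta_\K(2m+2-2k)$. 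Likewise the two $\tfrac H2\zeta_\K(2m+1)$ terms arise from $s=0$ (pole of $\zeta_\K(1-s)$) and $s=-2m$ (pole of $\zeta_\K(2m+1+s)$); the point $s=1$ is not a separate contributor but is the $k=0$ member of the cosine-pole family. Until Step 1 is corrected and the poles are relocated to the odd integers, the finite sum in \eqref{gen formula eqn} cannot be produced by your argument.
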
 
The following corollary provides Ramanujan's identity as a special case of the above theorem.
\begin{corollary}\label{cor1.2}
For $\K = \Q$, Ramanujan's identity \eqref{Ramanujan formula} holds.
\end{corollary}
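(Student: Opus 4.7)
The plan is simply to substitute $\K = \Q$ into Theorem~\ref{gen formula} and verify that \eqref{gen formula eqn} collapses term-by-term to \eqref{Ramanujan formula}. First I collect the invariants of $\Q$: degree $d = 1$ with signature $(r_1, r_2) = (1, 0)$, discriminant $D_\K = 1$, ideal-counting function $V_\K(n) \equiv 1$, and Dedekind zeta function $\zeta_\K = \zeta$. Taking $h = R = 1$ and $w = 2$ in the analytic class number formula gives residue $H = \frac{2^{1}(2\pi)^{0}\cdot 1 \cdot 1}{2\sqrt{1}} = 1$, and the prefactor evaluates to $\mathcal{C}_\K = \frac{2^{0}(2\pi)^{0}}{\sqrt{1}} = 1$. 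The balance condition $\alpha\beta = \pi^{2d}$ becomes $\alpha\beta = \pi^{2}$.

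The one nontrivial input is the specialization $\Omega_\Q(x) = 1/(e^x - 1)$, announced just after \eqref{Lamber series numberfield} and established in the paper as \eqref{OmegaQ}. It arises from a reduction of the Meijer $G$-function in \eqref{omega definition} at $d = 1$, $r_1 = 1$, $r_2 = 0$, where the parameter lists degenerate and the inner sum over $j$ telescopes into a geometric series in $e^{-x}$. Assuming this identity, the bracketed Lambert-type sums on each side of \eqref{gen formula eqn} reduce as
\begin{equation*}
\mathcal{C}_\K \sum_{n=1}^{\infty} \frac{V_\K(n)}{n^{2m+1}} \, \Omega_\K\!\left( \frac{2^{d} n \alpha}{D_\K} \right) \;=\; \sum_{n=1}^{\infty} \frac{n^{-2m-1}}{e^{2n\alpha} - 1},
\end{equation*}
and the same with $\beta$ in place of $\alpha$. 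Together with $\tfrac{H}{2}\zeta_\K(2m+1) = \tfrac{1}{2}\zeta(2m+1)$, the two braces in \eqref{gen formula eqn} therefore reproduce the two braces in \eqref{Ramanujan formula} verbatim.

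For the correction term, $d = 1$ forces $\pi^{(2m+1)d+1} = \pi^{2m+2}$ and $\zeta_\K(2j)\zeta_\K(2m-2j+2) = \zeta(2j)\zeta(2m-2j+2)$, so the last line of \eqref{gen formula eqn} matches the correction in \eqref{Ramanujan formula} exactly. Assembling the three pieces yields Ramanujan's identity. The main (and essentially the only) obstacle is the Meijer $G$-function collapse $\Omega_\Q(x) = 1/(e^x - 1)$; once that is granted, the corollary is pure bookkeeping with the class number formula and the signature of $\Q$.
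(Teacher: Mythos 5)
Your proposal is correct and follows essentially the same route as the paper: everything hinges on the specialization $\Omega_\Q(x)=1/(e^x-1)$, after which the corollary is pure bookkeeping with $d=1$, $(r_1,r_2)=(1,0)$, $D_\Q=1$, $V_\Q(n)=1$ and $H=\mathcal{C}_\Q=1$, exactly as you carry out. The one step you defer --- the collapse of the Meijer $G$-function --- is in fact the entire content of the paper's proof, which evaluates the $G^{2,0}_{0,2}$ with parameters $0,\tfrac{1}{2}$ at $x^2j^2/4$ as $\sqrt{2xj}\,K_{1/2}(xj)=\sqrt{\pi}\,e^{-jx}$ (citing Luke's tables) and then sums the resulting geometric series over $j$; your sketch of that reduction is accurate in outline.
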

When $\K$ is any real quadratic field, Franke \cite{Franke} has studied an analogue of Ramanujan's identity but could not get the result in a symmetric form. The following corollary follows from Theorem \ref{gen formula}, which provides the transformation formula in symmetric form.
\begin{corollary}\label{real case}
Let $\K$ be any real quadratic field. Let
\begin{align}
\mathfrak{R}(x):=\sum_{j=1}^\infty V_\K(j)\left(K_0(2 \epsilon \sqrt{j x})+K_0(2\overline{\epsilon}\sqrt{j x})\right),\nonumber
\end{align}
where $\epsilon=e^{i\pi/4}$ and $K_\nu(x)$ denotes the Bessel function of the second kind \textup{(}cf. \S \ref{pr}\textup{)}. If $\mathrm{Re}(\alpha)>0$ and $\mathrm{Re}(\beta)>0$ with $\alpha\beta=\pi^4$ and $m$ be any non-zero integer. Then 
\begin{align}\label{real case eqn}
&\alpha^{-m}\left\{\frac{H}{2}\zeta_\K(2m+1)+\frac{2}{\sqrt{D}_\K}\sum_{n=1}^\infty\frac{V_\K(n)}{n^{2m+1}}\mathfrak{R}\left(\frac{4n\alpha}{D_\K}\right)\right\}\nonumber\\
&=(-\beta)^{-m}\left\{\frac{H}{2}\zeta_\K(2m+1)+\frac{2}{\sqrt{D}_\K}\sum_{n=1}^\infty\frac{V_\K(n)}{n^{2m+1}}\mathfrak{R}\left(\frac{4n\beta}{D_\K}\right)\right\}\nonumber\\
&\qquad+\frac{1}{\pi^{4m+3}}\sum_{j=0}^{m+1}(-1)^{m+j}\zeta_\K(2j)\zeta_\K(2m-2j+2)\alpha^{m+1-j}\beta^j.
\end{align}
\end{corollary}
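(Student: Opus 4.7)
The plan is to derive Corollary \ref{real case} as a specialization of Theorem \ref{gen formula} to real quadratic $\K$, together with a Meijer $G$-function reduction that identifies $\Omega_\K(x)$ with the Bessel sum $\mathfrak{R}(x)$. For real quadratic $\K$ one has $d=2$, $r_1=2$, $r_2=0$, so the hypothesis $\alpha\beta=\pi^{2d}$ becomes $\alpha\beta=\pi^4$, $\mathcal{C_\K}=2^{r_1-1}(2\pi)^{r_2}/\sqrt{D_\K}=2/\sqrt{D_\K}$, the exponent $(2m+1)d+1$ is $4m+3$, and the prefactor $2^{1-r_1-r_2}/\pi^{1-r_1/2}$ in \eqref{omega definition} equals $1/2$. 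Once we show $\Omega_\K(x)=\mathfrak{R}(x)$, substitution into \eqref{gen formula eqn} yields $\mathcal{C_\K}\Omega_\K(x)=(2/\sqrt{D_\K})\mathfrak{R}(x)$, and \eqref{real case eqn} follows immediately.

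The core step is the Meijer--Bessel identity
\begin{equation*}
G^{3,0}_{0,4}\!\left(\begin{matrix}-\\ 0,0,1/2;1/2\end{matrix}\bigg|z\right)=2K_0\!\left(4z^{1/4}\epsilon\right)+2K_0\!\left(4z^{1/4}\overline{\epsilon}\right),
\end{equation*}
which I would derive from the Mellin--Barnes representation $\frac{1}{2\pi i}\int_L \Gamma(-s)^2\Gamma(1/2-s)/\Gamma(1/2+s)\,z^s\,ds$ of the left-hand side. After the substitution $s\mapsto -s$, the reflection formula $\Gamma(1/2-s)\Gamma(1/2+s)=\pi/\cos(\pi s)$ and the Legendre duplication formula $\Gamma(s)\Gamma(s+1/2)=2^{1-2s}\sqrt{\pi}\,\Gamma(2s)$ collapse the integrand, after the further rescaling $s\mapsto s/2$, to a constant multiple of $\Gamma(s)^2\cos(\pi s/2)(4\sqrt{z})^{-s}$. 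Splitting $\cos(\pi s/2)=\frac{1}{2}(e^{i\pi s/2}+e^{-i\pi s/2})$ produces two standard integrals of the form $\frac{1}{2\pi i}\int \Gamma(s)^2\,y^{-s}\,ds=2K_0(2\sqrt{y})$ evaluated at the conjugate complex arguments $y=4\sqrt{z}\,e^{\mp i\pi/2}$. Setting $z=x^2j^2/16$ gives $4z^{1/4}=2\sqrt{jx}$, and summing over $j$ with weight $V_\K(j)$ and the overall prefactor $1/2$ produces precisely $\mathfrak{R}(x)$.

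The main obstacle is the simultaneous use of reflection and duplication to collapse four $\Gamma$-factors to two while producing the $\cos(\pi s/2)$ that is responsible for splitting the Meijer $G$-function into two Bessel summands at $\epsilon$ and $\overline{\epsilon}$; some care is also needed to choose the Mellin--Barnes contour so that the pole chains are separated correctly after each substitution. Finally, one has to verify the exponential decay of $K_0(2\epsilon\sqrt{jx})$ and $K_0(2\overline{\epsilon}\sqrt{jx})$ in $j$ for $\mathrm{Re}(\alpha),\mathrm{Re}(\beta)>0$, ensuring that $\mathfrak{R}$ converges absolutely so that the termwise identification of $\Omega_\K$ with $\mathfrak{R}$ inside \eqref{gen formula eqn} is legitimate.
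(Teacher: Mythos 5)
Your proposal is correct and follows essentially the same route as the paper: specialize Theorem \ref{gen formula} to $d=2$, $r_1=2$, $r_2=0$, and reduce the Meijer $G$-function $G^{3,0}_{0,4}$ to $2K_0(2\epsilon\sqrt{jx})+2K_0(2\overline{\epsilon}\sqrt{jx})$ via the duplication and reflection formulas, the rescaling $s\mapsto s/2$, the splitting of $\cos(\pi s/2)$ into exponentials, and the Mellin--Barnes representation of $K_0$, so that the prefactor $1/2$ in \eqref{omega definition} yields $\Omega_\K=\mathfrak{R}$. All the constants you compute ($\mathcal{C}_\K=2/\sqrt{D_\K}$, $4z^{1/4}=2\sqrt{jx}$, the exponent $4m+3$) match the paper's.
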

The following corollary provides transformation formula for $\zeta_\K(2m+1)$, when $\K$ is an imaginary quadratic field.
\begin{corollary}\label{imaginary case}
Let $\K$ be any imaginary quadratic field. Let
\begin{align}\label{im quad kernel}
\mathcal{I}(x):=\frac{2i}{\pi}\sum_{j=1}^\infty V_\K(j)\left(K_0(2 \epsilon \sqrt{j x})-K_0(2\overline{\epsilon}\sqrt{j x})\right).
\end{align}
For $m\in \Z\setminus \{0\}$ and $\mathrm{Re}(\alpha), \mathrm{Re}(\beta)>0$ with $\alpha\beta=\pi^4$, we have
\begin{align}
&\alpha^{-m}\left\{\frac{H}{2}\zeta_\K(2m+1)+\frac{\pi}{\sqrt{D}_\K}\sum_{n=1}^\infty\frac{V_\K(n)}{n^{2m+1}}\mathcal{I}\left(\frac{4 n \alpha}{D_\K}\right)\right\}\nonumber\\
&=(-\beta)^{-m}\left\{\frac{H}{2}\zeta_\K(2m+1)+\frac{\pi}{\sqrt{D}_\K}\sum_{n=1}^\infty\frac{V_\K(n)}{n^{2m+1}}\mathcal{I}\left(\frac{4 n \beta}{D_\K}\right)\right\}\nonumber\\
&\quad+\frac{1}{\pi^{4m+3}}\sum_{j=0}^{m+1}(-1)^{m+j}\zeta_\K(2j)\zeta_\K(2m-2j+2)\alpha^{m+1-j}\beta^j.\nonumber
\end{align}
\end{corollary}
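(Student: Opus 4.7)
The plan is to apply Theorem~\ref{gen formula} with $\K$ an imaginary quadratic field, for which the signature is $(r_1,r_2)=(0,1)$ and $d=r_1+2r_2=2$. Under this specialisation $2^d=4$, $\alpha\beta=\pi^{2d}=\pi^{4}$, and $\mathcal{C}_\K=\frac{2^{r_1-1}(2\pi)^{r_2}}{\sqrt{D_\K}}=\frac{\pi}{\sqrt{D_\K}}$, exactly the constants appearing in the statement of the corollary. All substantive work is therefore concentrated in identifying the Meijer $G$-function kernel $\Omega_\K$ from \eqref{omega definition} with the Bessel-type kernel $\mathcal{I}$ of \eqref{im quad kernel}.

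Plugging $(r_1,r_2)=(0,1)$ into \eqref{omega definition}, the parameter lists collapse to $0,\tfrac12,\tfrac12$ on the first side and $0$ on the second, giving
\begin{equation*}
\Omega_\K(x)=\frac{1}{\pi}\sum_{j=1}^\infty V_\K(j)\,G^{3,0}_{0,4}\!\left(\tfrac{x^2j^2}{16}\;\Big|\;0,\tfrac12,\tfrac12,0\right).
\end{equation*}
I would prove $\Omega_\K(x)=\mathcal{I}(x)$ by matching Mellin transforms. On the Meijer $G$ side, the standard formula
\begin{equation*}
\int_0^\infty u^{\sigma-1}\,G^{3,0}_{0,4}\!\left(u\;\Big|\;0,\tfrac12,\tfrac12,0\right)du=\frac{\Gamma(\sigma)\Gamma(\sigma+\tfrac12)^2}{\Gamma(1-\sigma)},
\end{equation*}
combined with the substitution $u=x^2j^2/16$, Legendre's duplication $\Gamma(\tfrac{s}{2})\Gamma(\tfrac{s+1}{2})=2^{1-s}\sqrt{\pi}\,\Gamma(s)$, and the reflection $\Gamma(\sigma)/\Gamma(1-\sigma)=\Gamma(\sigma)^2\sin(\pi\sigma)/\pi$, yields
\begin{equation*}
\int_0^\infty x^{s-1}\Omega_\K(x)\,dx=\frac{2}{\pi}\,\Gamma(s)^2\sin\!\left(\tfrac{\pi s}{2}\right)\zeta_\K(s).
\end{equation*}
On the Bessel side, the identity $\int_0^\infty x^{s-1}K_0(2\sqrt{ax})\,dx=\Gamma(s)^2/(2a^s)$ applied at $a=ij$ and $a=-ij$ (both have $\Re\sqrt{a}>0$ so the integrals converge), together with $i^{-s}-(-i)^{-s}=-2i\sin(\pi s/2)$ and termwise summation over $j$ (absolute convergence for $\Re(s)>1$), produces the same expression for $\int_0^\infty x^{s-1}\mathcal{I}(x)\,dx$. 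Mellin inversion in the common half-plane $\Re(s)>1$ then forces $\Omega_\K=\mathcal{I}$.

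Substituting $\Omega_\K=\mathcal{I}$ together with the values of $d$, $2^d$, and $\mathcal{C}_\K$ computed above into Theorem~\ref{gen formula} immediately produces the statement of Corollary~\ref{imaginary case}. The principal obstacle is precisely the Meijer-$G$-to-Bessel identification: the argument $x^2j^2/16$ inside $G^{3,0}_{0,4}$ (proportional to $j^2$) and the argument $2\epsilon\sqrt{jx}$ inside $K_0$ (proportional to $\sqrt j$) only reconcile after the duplication formula collapses $\Gamma(\sigma)\Gamma(\sigma+\tfrac12)^2/\Gamma(1-\sigma)$ down to $\Gamma(s)^2\sin(\pi s/2)$; once this is accomplished, the rest of the proof is a one-line substitution.
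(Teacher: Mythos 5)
Your proposal is correct and follows essentially the same route as the paper: both reduce the corollary to the specialisation $(r_1,r_2)=(0,1)$ of Theorem \ref{gen formula} together with the identification $\Omega_\K=\mathcal{I}$, carried out via the duplication and reflection formulas and the Mellin(-Barnes) representation of $K_0$. The only (cosmetic) difference is that you match forward Mellin transforms and invoke inversion, whereas the paper manipulates the inverse Mellin contour integral of the Meijer $G$-function directly, splitting $\sin(\pi s/2)$ into exponentials to recognise the two $K_0$ integrals.
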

We next generalize Lerch identity \eqref{lerch} for an arbitrary number field $\K$.
\begin{theorem}\label{General Lerch}
Let $\K$ be an arbitrary number field. For any integer $m$, the following identity holds :
\begin{align}\label{General Lerch eqn}
\zeta_\K(4m+3) = \frac{1}{\pi H} \sum_{j=0}^{2m+2} (-1)^{j+1} \zeta_\K(2j) \zeta_\K(4m-2j+4)
 - \frac{2^{r_1}(2\pi)^{r_2}}{H \sqrt{D_\K}}\sum_{n=1}^{\infty} \frac{V_{\K}(n)}{n^{4m+3}}\Omega_{\K}\left(\frac{(2\pi)^d n}{D_{\K}} \right).
\end{align}
\end{theorem}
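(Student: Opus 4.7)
The plan is to derive Theorem~\ref{General Lerch} as a direct specialization of Theorem~\ref{gen formula}, exactly mirroring how the classical Lerch identity~\eqref{lerch} drops out of Ramanujan's formula~\eqref{Ramanujan formula} upon setting $\alpha = \beta = \pi$. Since Theorem~\ref{gen formula} already contains all of the analytic content, the entire argument reduces to careful algebraic bookkeeping.

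First, in~\eqref{gen formula eqn} I would replace the index $m$ by $2m+1$, so that the zeta argument $2m+1$ becomes $4m+3$, and then take the symmetric point $\alpha = \beta = \pi^{d}$, which satisfies the constraint $\alpha\beta = \pi^{2d}$ and lies in the admissible region $\mathrm{Re}(\alpha),\mathrm{Re}(\beta)>0$. Because $2m+1$ is odd, $(-\beta)^{-(2m+1)} = -\pi^{-(2m+1)d} = -\alpha^{-(2m+1)}$, so the two bracketed quantities on the left- and right-hand sides of~\eqref{gen formula eqn} are negatives of each other and can be combined into twice one of them when collected on a single side.

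Next I would simplify the finite sum. With $\alpha = \beta = \pi^{d}$ and $m$ replaced by $2m+1$, the factor $\alpha^{m+1-j}\beta^{j}$ equals $\pi^{d(2m+2)}$ independently of $j$, while the sign $(-1)^{m+j}$ collapses to $(-1)^{j+1}$. Combining $\pi^{d(2m+2)}$ with the prefactor $\pi^{-(4m+3)d-1}$ yields the overall coefficient $\pi^{-(2m+1)d - 1}$ in front of the finite sum, which cancels against $\alpha^{2m+1} = \pi^{(2m+1)d}$ after one multiplies both sides of the identity by $\alpha^{2m+1}$. Solving for the bracketed quantity then gives
\begin{equation*}
\frac{H}{2}\zeta_\K(4m+3) + \mathcal{C}_\K \sum_{n=1}^\infty \frac{V_\K(n)}{n^{4m+3}}\,\Omega_\K\!\left(\frac{(2\pi)^d n}{D_\K}\right) = \frac{1}{2\pi}\sum_{j=0}^{2m+2}(-1)^{j+1}\zeta_\K(2j)\zeta_\K(4m-2j+4),
\end{equation*}
and dividing by $H/2$ while using the identity $2\mathcal{C}_\K/H = 2^{r_1}(2\pi)^{r_2}/(H\sqrt{D_\K})$ produces~\eqref{General Lerch eqn} verbatim.

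The main obstacle is simply bookkeeping: one must verify that the powers of $\pi$ collected from $\alpha^{2m+2-j}\beta^j$, from the prefactor $\pi^{-(4m+3)d-1}$, and from the scaling factor $\alpha^{2m+1}$ all cancel to leave exactly $(2\pi)^{-1}$, and that the sign $(-1)^{m+j}$ in the original identity becomes $(-1)^{j+1}$ after the shift $m \mapsto 2m+1$. One should also note that the smallest admissible value $m = 0$ in Theorem~\ref{General Lerch} corresponds to $m \mapsto 1$ in Theorem~\ref{gen formula}, which is a nonzero integer and therefore lies within the hypothesis of the latter; no boundary or limiting argument is required.
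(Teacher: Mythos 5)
Your proposal is correct and is exactly the paper's proof: the authors also obtain Theorem~\ref{General Lerch} by setting $\alpha=\beta=\pi^{d}$ and replacing $m$ by $2m+1$ in Theorem~\ref{gen formula}, then simplifying. Your bookkeeping of the sign $(-1)^{m+j}\mapsto(-1)^{j+1}$, the powers of $\pi$, and the constant $2\mathcal{C}_\K/H = 2^{r_1}(2\pi)^{r_2}/(H\sqrt{D_\K})$ all checks out.
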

The arithmetic nature of $\zeta_\K(2m+1)$ for $m$ odd, follows from the above theorem. 
\begin{corollary}\label{transcendental result}
For $\K$ totally real, atleast one of $\zeta_\K(4m+3)$ and $\sum\limits_{n=1}^{\infty} \frac{V_{\K}(n)}{n^{4m+3}}\Omega_{\K}\left(\frac{(2\pi)^d n}{D_{\K}} \right)$ is transcendental.
\end{corollary}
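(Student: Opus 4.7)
The plan is to combine Theorem \ref{General Lerch} with the Klingen--Siegel evaluation \eqref{KlingenSiegel} of Dedekind zeta values at positive even integers, together with the Lindemann--Weierstrass transcendence of $\pi$.

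First, I would specialize Theorem \ref{General Lerch} to $\K$ totally real, so that $r_1=d$ and $r_2=0$. By \eqref{KlingenSiegel}, each $\zeta_\K(2j)$ with $j\ge 1$ is a rational multiple of $\pi^{2jd}/\sqrt{D_\K}$, and $\zeta_\K(0)$ is rational by the functional equation. Therefore the finite convolution appearing on the right-hand side of \eqref{General Lerch eqn} collapses to
$$\sum_{j=0}^{2m+2}(-1)^{j+1}\zeta_\K(2j)\zeta_\K(4m-2j+4)=\alpha\,\pi^{(4m+4)d}$$
for some algebraic $\alpha\in\overline{\Q}$ depending only on the Klingen--Siegel rationals and $D_\K$.

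Next, I would substitute this back into \eqref{General Lerch eqn} and apply the analytic class-number formula $H=2^d hR/(w\sqrt{D_\K})$ (valid here since $r_2=0$) to clear $H$ from the denominators, rearranging the identity into the clean shape
$$hR\,\zeta_\K(4m+3)+w\,S=\beta\,\pi^{(4m+4)d-1},$$
where $S$ denotes the Lambert-type series from the statement of the corollary and $\beta=\alpha w\sqrt{D_\K}/2^d\in\overline{\Q}$. Since $\pi$ is transcendental and $(4m+4)d-1\ge 3$, the right-hand side is transcendental, provided $\beta\ne 0$, which follows from a short direct inspection of $\alpha$ using the nonvanishing of $\zeta_\K(0)\zeta_\K(4m+4)$.

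The conclusion is by contradiction: if both $\zeta_\K(4m+3)$ and $S$ were algebraic, then, since $\zeta_\K(4m+3)>0$, one could solve the displayed equation for $R$ to obtain $R\in\overline{\Q}+\overline{\Q}\,\pi^{(4m+4)d-1}\subset\overline{\Q}(\pi)$. Turning this into a genuine contradiction is the main obstacle: for $\K=\Q$ it is immediate since $R=1$, and for totally real fields of higher degree the natural route is to invoke Baker's theorem on linear combinations of logarithms of algebraic numbers, which prevents the regulator --- a nonzero $\mathbb{Z}$-linear combination of such logarithms --- from admitting the forced algebraic dependence on $\pi^{(4m+4)d-1}$. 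This rules out the simultaneous algebraicity of $\zeta_\K(4m+3)$ and $S$, completing the proof of the corollary.
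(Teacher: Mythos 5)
Your overall route is the same as the paper's: specialize Theorem \ref{General Lerch} to a totally real field, use the Klingen--Siegel evaluation \eqref{KlingenSiegel} to collapse the convolution sum into an algebraic multiple of $\pi^{(4m+4)d}$, and argue that $\zeta_\K(4m+3)$ and the series $S$ cannot both be algebraic. Your intermediate identity $hR\,\zeta_\K(4m+3)+wS=\beta\,\pi^{(4m+4)d-1}$ is correct, and to your credit you explicitly confront the class-number-formula constant $H$ (equivalently the regulator $R$), which the paper's two-sentence proof silently treats as if it were algebraic. However, two of your steps do not hold up. First, the nonvanishing of $\beta$: for a totally real field of degree $d\geq 2$ the Dedekind zeta function vanishes to order $r_1+r_2-1=d-1\geq 1$ at $s=0$, so $\zeta_\K(0)\zeta_\K(4m+4)=0$ and gives no information; what must be shown is that the alternating sum $\sum_{j=1}^{2m+1}(-1)^{j+1}q_jq_{2m+2-j}$ of Klingen--Siegel rationals is nonzero, which is immediate for $m=0$ (a single positive term) but not obvious in general. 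The paper does not address this either.

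Second, and more seriously, the step you yourself call ``the main obstacle'' is not closed by Baker's theorem. Baker controls \emph{linear} forms $\beta_0+\sum_i\beta_i\log\alpha_i$ in logarithms of algebraic numbers; here the exponent $N=(4m+4)d-1\geq 3$, so $\pi^{N}$ is not a linear form in logarithms, and for $d\geq 3$ the regulator is a determinant -- a higher-degree polynomial, not a linear form -- in logarithms of units. Even in the real quadratic case, where $R$ is essentially $\log\epsilon_0$ for the fundamental unit $\epsilon_0$, excluding a relation $\log\epsilon_0=B+A\pi^{N}$ with $A,B$ algebraic and $N\geq 2$ amounts to the algebraic independence of $\pi$ and $\log\epsilon_0$, a special case of the open conjecture on algebraic independence of logarithms. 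So the contradiction you need is only available for $\K=\Q$, where $R=1$. In short: you have correctly isolated a genuine gap that the published proof glosses over, but your proposed repair does not close it, and as written your argument (like the paper's) only establishes the corollary unconditionally in the rational case.
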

\begin{remark}
When $\K$ is any real quadratic field, the asymptotics of $K_0(x)$ as $x\to\infty$ readily implies that the infinite series in the above corollary is rapidly convergent. Thus, we can conclude that $\zeta_\K(4m+3)$ is ``almost" an algebraic multiple of $\pi^{n}$ for some fixed positive integer $n$.
\end{remark}

We next investigate the applications of Theorem \ref{gen formula} in the theory of modular form. Let $f$ be a holomorphic function on an Poincare upper half-plane $\mathfrak{H}$. For any positive integer $k$, $f$ is said to be a modular form of weight $k$ over SL$_2(\Z)$, if it satisfies 
\begin{align}
(i) \, f(z+1) &= f(z)\nonumber\\
(ii) \, f(-1/z) &=(-z)^k f(z).\label{2property}
\end{align}
In the theory of modular forms, the crucial example of modular forms is the Eisenstein series. For $z \in \mathfrak{H}$, the `normalized Eisenstein series' of even integral weight $k$ over SL$_2(\Z)$ can be defined by the following Fourier series expansion :
\begin{equation}
E_k(z) := 1- \frac{2k}{B_k}\sum_{n=1}^\infty \sigma_{k-1}(n) e^{2\pi i nz}=1- \frac{2k}{B_k}\sum_{n=1}^\infty \frac{n^{k-1}}{ e^{-2\pi i nz}-1}. \nonumber
\end{equation}
We here extend the above series definition of Eisenstein series over an arbitrary number field $\K$. For an even positive integer $k$ and $z \in \mathfrak{H}$, we define 
\begin{equation}\label{General Eisenstein}
\mathcal{G}_{k, \K}(z) := \frac{H}{2\mathcal{C}_\K} \zeta_\K(1-k) + \sum_{n=1}^\infty\frac{V_\K(n)}{n^{1-k}}\Omega_\K\left(-\frac{(2\pi)^d i n z}{D_\K}\right).
\end{equation}
If $\K$ is totally real, the above definition can be normalized as
\begin{equation}
\mathcal{E}_{k, \K}(z) := \frac{2\mathcal{C}_\K}{H\zeta_\K(1-k)} \mathcal{G}_{k, \K}(z) =  1 + \frac{2\mathcal{C}_\K}{H\zeta_\K(1-k)} \sum_{n=1}^\infty\frac{V_\K(n)}{n^{1-k}}\Omega_\K\left(-\frac{(2\pi)^d i n z}{D_\K}\right).\nonumber
\end{equation}
In particular, for $\K= \Q$, $\mathcal{E}_{k, \Q}(z) = E_k(z)$ (using \eqref{OmegaQ} below).
For $\K$ not totally real, the first term in \eqref{General Eisenstein} vanishes. Thus we have
\begin{equation*}
\mathcal{G}_{k, \K}(z) = \sum_{n=1}^\infty\frac{V_\K(n)}{n^{1-k}}\Omega_\K\left(-\frac{(2\pi)^d i n z}{D_\K}\right).
\end{equation*}
In the following theorem, we have shown that the extended Eisenstein series satisfies the transformation formula \eqref{2property} of an integral weight modular form over SL$_2(\Z)$.
\begin{theorem}\label{eisen trans}
Let $\K$ be an arbitrary number field. For $k$ be an even integer greater than 2 and $z \in \mathfrak{H}$, we have
\begin{equation}
\mathcal{G}_{k, \K} (-1/z) = z^{k} \, \mathcal{G}_{k, \K}(z).\nonumber
\end{equation}
\end{theorem}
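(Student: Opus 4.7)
The plan is to derive Theorem \ref{eisen trans} as a direct specialization of Theorem \ref{gen formula}. I will first choose $m = -k/2$, so that $2m+1 = 1-k$ aligns the exponent of $n$ in \eqref{gen formula eqn} with the exponent appearing in the definition of $\mathcal{G}_{k,\K}$. Next I will pick $\alpha = -\pi^d i z$, which has $\mathrm{Re}(\alpha) = \pi^d\,\mathrm{Im}(z) > 0$ on $\mathfrak{H}$, so that $2^d n \alpha/D_\K = -(2\pi)^d i n z/D_\K$, matching the argument of $\Omega_\K$ in the definition of $\mathcal{G}_{k,\K}(z)$. The conjugate parameter is then forced: $\beta = \pi^{2d}/\alpha = \pi^d i/z$, which also has positive real part, and one checks that $2^d n \beta/D_\K = -(2\pi)^d i n (-1/z)/D_\K$. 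With these substitutions the two bracketed expressions in \eqref{gen formula eqn} become $\mathcal{C}_\K\,\mathcal{G}_{k,\K}(z)$ and $\mathcal{C}_\K\,\mathcal{G}_{k,\K}(-1/z)$ respectively.

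The key structural observation, and the step that makes everything work, is that for even $k > 2$ the index $m = -k/2$ is a negative integer with $m+1 \leq -1$, so the finite sum $\sum_{j=0}^{m+1}$ in the correction term of \eqref{gen formula eqn} is empty and vanishes identically. This is precisely why the hypothesis $k > 2$ appears in the statement: at $k = 2$ a single non-vanishing term would survive, reflecting the well-known quasi-modularity of the classical weight-two Eisenstein series $E_2$.

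After cancelling the common factor $\mathcal{C}_\K$, the remaining identity reduces to
\begin{equation*}
\alpha^{-m}\, \mathcal{G}_{k,\K}(z) = (-\beta)^{-m}\, \mathcal{G}_{k,\K}(-1/z),
\end{equation*}
and a direct computation gives $\alpha/(-\beta) = (-\pi^d i z)/(-\pi^d i /z) = z^2$. Since $k/2$ is a positive integer, $(z^2)^{-m} = (z^2)^{k/2} = z^k$ unambiguously, with no branch issue. Rearranging then yields the desired $\mathcal{G}_{k,\K}(-1/z) = z^k\,\mathcal{G}_{k,\K}(z)$. I do not expect any substantial obstacle here: the analytic heavy lifting has already been done in Theorem \ref{gen formula}, and what remains is a careful bookkeeping of the substitutions, together with the two routine verifications that $\mathrm{Re}(\alpha), \mathrm{Re}(\beta) > 0$ on $\mathfrak{H}$ and that the correction sum is empty when $k > 2$.
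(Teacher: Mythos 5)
Your proposal is correct and follows essentially the same route as the paper: the authors also specialize Theorem \ref{gen formula} at a negative integer index with $\alpha=-\pi^d iz$, $\beta=\pi^d i/z$, merely packaging the intermediate step as the symmetric identity of Theorem \ref{eisen trans symm} before reading off the modular transformation. Your observations that the finite sum is empty precisely when $k>2$, that $\mathrm{Re}(\alpha),\mathrm{Re}(\beta)>0$ on $\mathfrak{H}$, and that $(z^2)^{k/2}=z^k$ involves no branch ambiguity are exactly the checks needed.
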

\begin{remark}In general, $\mathcal{G}_{k, \K}(z)$ is not a periodic function. Thus, it is not exactly a modular form over SL$_2(\Z)$.
\end{remark}
The above theorem can be re-stated in the following symmetric form.
\begin{theorem}\label{eisen trans symm}
Let $\K$ be an arbitrary number field. For any non-zero positive integer $m>1$ and  $\mathrm{Re}(\alpha), \mathrm{Re}(\beta)>0$ with $\alpha\beta=\pi^{2d}$, we have
\begin{align}\label{eisen trans symm eqn}
\alpha^m \sum_{n=1}^\infty\frac{V_\K(n)}{n^{1-2m}}\Omega_\K\left(\frac{2^d n \alpha}{D_\K}\right) - (-\beta)^m \sum_{n=1}^\infty\frac{V_\K(n)}{n^{1-2m}}\Omega_\K\left(\frac{2^d n \beta}{D_\K}\right) = - \frac{H}{2\mathcal{C_\K}}\{\alpha^m - (-\beta)^m\}\zeta_{\K}(1-2m).
\end{align}
In particular, for $\K$ not totally real, we have
\begin{align*}
\alpha^m \sum_{n=1}^\infty\frac{V_\K(n)}{n^{1-2m}}\Omega_\K\left(\frac{2^d n \alpha}{D_\K}\right) = (-\beta)^m \sum_{n=1}^\infty\frac{V_\K(n)}{n^{1-2m}}\Omega_\K\left(\frac{2^d n \beta}{D_\K}\right).
\end{align*}
\end{theorem}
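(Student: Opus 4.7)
The plan is to deduce \eqref{eisen trans symm eqn} directly from Theorem~\ref{eisen trans}, by restricting the variable $z$ there to the positive imaginary axis and reparameterising in terms of $\alpha$ and $\beta$. First, set $k=2m$ in Theorem~\ref{eisen trans}, which requires $m\geq 2$ so that $k>2$ as hypothesised. Take $z=iu$ with $u>0$; then $-1/z=i/u$ also lies in $\mathfrak{H}$, and in the definition \eqref{General Eisenstein} the argument $-(2\pi)^d i n z/D_\K$ collapses to the real quantity $(2\pi)^d nu/D_\K$. To make this match the arguments appearing in \eqref{eisen trans symm eqn}, choose $u=\beta/\pi^d$: under the constraint $\alpha\beta=\pi^{2d}$ this forces $1/u=\alpha/\pi^d$, so that $(2\pi)^d nu/D_\K=2^d n\beta/D_\K$ and the corresponding $\Omega_\K$-argument at $-1/z$ becomes $2^d n\alpha/D_\K$ precisely.

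The transformation factor then reads $z^{2m}=(iu)^{2m}=(-1)^m u^{2m}$, and since $u^2=\beta^2/\pi^{2d}=\beta/\alpha$ this simplifies to $(-\beta)^m/\alpha^m$. Substituting the series expression \eqref{General Eisenstein} for $\mathcal{G}_{2m,\K}$ into both sides of $\mathcal{G}_{2m,\K}(-1/z)=z^{2m}\mathcal{G}_{2m,\K}(z)$, multiplying through by $\alpha^m$, and collecting the two $\zeta_\K(1-2m)$ terms on one side produces
\begin{equation*}
\alpha^m\sum_{n=1}^\infty\frac{V_\K(n)}{n^{1-2m}}\Omega_\K\!\left(\frac{2^d n\alpha}{D_\K}\right)-(-\beta)^m\sum_{n=1}^\infty\frac{V_\K(n)}{n^{1-2m}}\Omega_\K\!\left(\frac{2^d n\beta}{D_\K}\right)=-\frac{H}{2\mathcal{C}_\K}\bigl(\alpha^m-(-\beta)^m\bigr)\zeta_\K(1-2m),
\end{equation*}
valid for real positive $\alpha,\beta$ with $\alpha\beta=\pi^{2d}$. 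To extend to complex $\alpha,\beta$ with $\mathrm{Re}(\alpha),\mathrm{Re}(\beta)>0$, both sides may be viewed as holomorphic functions of $\alpha$ on the right half-plane (with $\beta=\pi^{2d}/\alpha$); the super-exponential decay of $\Omega_\K(x)$ built into its Meijer $G$-representation \eqref{omega definition} guarantees absolute, locally uniform convergence, and the identity principle yields the full statement.

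For the ``in particular'' assertion, suppose $\K$ is not totally real, so $r_2\geq 1$. The completed Dedekind zeta function carries a factor $\Gamma(s)^{r_2}$, which has a pole of order $r_2$ at the non-positive integer $s=1-2m$, while $\Gamma(s/2)^{r_1}$ remains regular there; comparison with the finiteness of $\zeta_\K(2m)$ on the reflected side of the functional equation forces $\zeta_\K(1-2m)=0$, so the right-hand side of \eqref{eisen trans symm eqn} vanishes and the identity collapses to the stated equality of the two series. The main delicate point is the bookkeeping in step one required to line up the $\Omega_\K$-arguments with those in \eqref{eisen trans symm eqn}; once the substitution $u=\beta/\pi^d$ is fixed, the remainder of the argument is routine algebra together with a standard analytic continuation.
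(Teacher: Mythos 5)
Your argument is circular as it stands. You deduce Theorem \ref{eisen trans symm} from Theorem \ref{eisen trans}, but in the paper the logical dependence runs the other way: Theorem \ref{eisen trans} is itself obtained \emph{from} Theorem \ref{eisen trans symm} by exactly the substitution you invert (the paper sets $\alpha=-\pi^d iz$, $\beta=\pi^d i/z$ in \eqref{eisen trans symm eqn}; your choice $z=i\beta/\pi^d$ is the same change of variables read backwards). Since you supply no independent proof of the transformation $\mathcal{G}_{k,\K}(-1/z)=z^k\mathcal{G}_{k,\K}(z)$, the whole derivation rests on the statement you are trying to prove. Your algebra is internally consistent --- the identification of the $\Omega_\K$-arguments and the computation $z^{2m}=(-\beta)^m/\alpha^m$ are correct, and the analytic-continuation remark and the vanishing of $\zeta_\K(1-2m)$ for $r_2\geq 1$ via the functional equation \eqref{functional equation} are both fine --- but none of this closes the loop.

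The missing idea is that \eqref{eisen trans symm eqn} follows directly from Theorem \ref{gen formula}, which \emph{is} proved independently (by contour integration against the functional equation). Replace $m$ by $-m$ in \eqref{gen formula eqn} with $m>1$ a positive integer: the finite sum $\sum_{j=0}^{-m+1}$ is then empty, $\zeta_\K(2m+1)$ becomes $\zeta_\K(1-2m)$, and dividing by $\mathcal{C}_\K$ and rearranging gives \eqref{eisen trans symm eqn} at once. (The restriction $m>1$ is needed precisely so that the finite sum is empty; for $m=1$ a single term survives, which is the quasi-modular correction of Theorem \ref{quasi}.) If you wish to keep your route, you would first have to prove Theorem \ref{eisen trans} without appealing to Theorem \ref{eisen trans symm}, which would amount to redoing that residue computation anyway.
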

As an easy consequence of the above theorem, the following corollary provides an explicit evaluation of generalized Lambert series.
\begin{corollary}\label{series evaluation}
For an arbitrary number field $\K$ and for any odd natural number $m>1$, 
\begin{equation}\label{series evaluation eqn}
\sum_{n=1}^\infty \frac{V_\K(n)}{n^{1-2m}}\Omega_\K\left(\frac{(2\pi)^d n}{D_\K}\right) = - \frac{H}{2\mathcal{C_\K}}\zeta_{\K}(1-2m).
\end{equation}
In particular, If $\K$ is not totally real, 
\begin{equation*}
\sum_{n=1}^\infty \frac{V_\K(n)}{n^{1-2m}}\Omega_\K\left(\frac{(2\pi)^d n}{D_\K}\right) = 0.
\end{equation*}
\end{corollary}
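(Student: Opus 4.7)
The plan is to deduce Corollary~\ref{series evaluation} directly from Theorem~\ref{eisen trans symm} by specializing $\alpha$ and $\beta$ to the symmetric point, combined with a standard trivial-zero computation for the second assertion. Concretely, I would set $\alpha=\beta=\pi^{d}$, which is admissible since $\alpha\beta=\pi^{2d}$. Under this choice one has $2^{d}n\alpha/D_{\K}=2^{d}n\beta/D_{\K}=(2\pi)^{d}n/D_{\K}$, so both Lambert-type sums appearing in \eqref{eisen trans symm eqn} collapse to the single series
\begin{equation*}
S:=\sum_{n=1}^{\infty}\frac{V_{\K}(n)}{n^{1-2m}}\,\Omega_{\K}\!\left(\frac{(2\pi)^{d}n}{D_{\K}}\right).
\end{equation*}

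Next I would exploit the parity of $m$. Since $m$ is odd, $(-\beta)^{m}=-\beta^{m}=-\pi^{dm}$, so the left-hand side of \eqref{eisen trans symm eqn} becomes $\pi^{dm}S-(-\pi^{dm})S=2\pi^{dm}S$, while the right-hand side becomes
\begin{equation*}
-\frac{H}{2\mathcal{C}_{\K}}\bigl\{\pi^{dm}-(-\pi^{dm})\bigr\}\zeta_{\K}(1-2m)=-\frac{H\pi^{dm}}{\mathcal{C}_{\K}}\zeta_{\K}(1-2m).
\end{equation*}
Dividing both sides by $2\pi^{dm}$ yields \eqref{series evaluation eqn}.

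For the in-particular assertion, I would invoke the functional equation of the Dedekind zeta function. The completed function
\begin{equation*}
\Lambda_{\K}(s)=|D_{\K}|^{s/2}\pi^{-r_{1}s/2}(2\pi)^{-r_{2}s}\Gamma(s/2)^{r_{1}}\Gamma(s)^{r_{2}}\zeta_{\K}(s)
\end{equation*}
is entire apart from simple poles at $s=0,1$. The factor $\Gamma(s)^{r_{2}}$ has a pole of order $r_{2}$ at every non-positive integer, while $\Gamma(s/2)^{r_{1}}$ is regular at negative odd integers. Hence at $s=1-2m$ (a negative odd integer for $m\geq 1$) cancellation forces $\zeta_{\K}(1-2m)$ to vanish to order at least $r_{2}$; when $\K$ is not totally real we have $r_{2}\geq 1$, so $\zeta_{\K}(1-2m)=0$ and the right-hand side of \eqref{series evaluation eqn} is zero. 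Since every step is either a direct substitution into the already-established Theorem~\ref{eisen trans symm} or a standard trivial-zero computation, I do not anticipate a substantive obstacle; the only bookkeeping requiring mild care is tracking the signs in the parity reduction.
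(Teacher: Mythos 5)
Your proposal is correct and follows essentially the same route as the paper: the authors likewise specialize Theorem~\ref{eisen trans symm} at $\alpha=\beta=\pi^{d}$ with $m$ odd (phrased there as the substitution $m\mapsto 2m+1$), where the two sums coalesce and the parity of $m$ gives the factor $2\pi^{dm}$ on both sides. The second assertion is handled identically, via the trivial zero of order $r_{2}$ of $\zeta_{\K}$ at negative odd integers, which the paper simply cites as known and you derive from the completed zeta function.
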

The weight-2 Eisenstein series $E_2(z)$ over SL$_2(\Z)$ is not exactly a modular form since instead of \eqref{2property}, it satisfies
\begin{equation}
E_2(-1/z) = z^2 E_2(z) + \frac{6z}{\pi i}.\nonumber
\end{equation}
Therefore, it sometimes known as quasi-modular form. In the following theorem, we investigate the transformation of $\mathcal{G}_{2, \K}(z) $.
\begin{theorem}\label{quasi}
Let $\K$ be an arbitrary number field. For $z\in \mathfrak{H}$, 
\begin{equation}\label{quasi1}
\mathcal{G}_{2, \K}(-1/z) = z^2 \mathcal{G}_{2, \K}(z) - \frac{z}{\pi i}\zeta_\K^2(0).
\end{equation}
Equivalently, for $\mathrm{Re}(\alpha), \mathrm{Re}(\beta)>0$ with $\alpha\beta=\pi^{2d}$,
\begin{align}\label{quasi2}
\alpha\sum_{n=1}^\infty n V_\K(n) \Omega_\K\left(\frac{2^d n \alpha}{D_\K}\right) + \beta\sum_{n=1}^\infty n V_\K(n) \Omega_\K\left(\frac{2^d n \beta}{D_\K}\right) = -\frac{H\zeta_\K(-1)}{2\mathcal{C}_\K}(\alpha+\beta) - \frac{\zeta_\K^2(0)}{\pi^{1-d}}.
\end{align}
\end{theorem}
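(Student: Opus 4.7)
The plan is to derive Theorem \ref{quasi} by specialising Theorem \ref{gen formula} at $m=-1$, which is allowed since that theorem only excludes $m=0$. Since the sum on the left of \eqref{quasi2} involves $nV_\K(n)=V_\K(n)/n^{-1}$, it corresponds precisely to the choice $2m+1=-1$ in the hypothesis of Theorem \ref{gen formula}. The first step is to reduce \eqref{quasi1} to \eqref{quasi2} via the substitution $\alpha=-i\pi^d z$ and $\beta=i\pi^d/z$; this clearly satisfies $\alpha\beta=\pi^{2d}$ and sends $\tfrac{2^d n\alpha}{D_\K}\mapsto -\tfrac{(2\pi)^d inz}{D_\K}$ and $\tfrac{2^d n\beta}{D_\K}\mapsto \tfrac{(2\pi)^d in}{D_\K z}$, which are precisely the arguments of $\Omega_\K$ appearing in $\mathcal{G}_{2,\K}(z)$ and $\mathcal{G}_{2,\K}(-1/z)$ respectively. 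After multiplying \eqref{quasi2} by $i\pi^d/z$ and grouping the constant terms proportional to $\zeta_\K(-1)$ against $(1-z^2)$, the resulting identity is exactly \eqref{quasi1}, with the isolated term $-\zeta_\K^2(0)/\pi^{1-d}$ becoming the quasi-modular correction $-z\zeta_\K^2(0)/(\pi i)$.

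Next, I would substitute $m=-1$ into \eqref{gen formula eqn}. The exponents simplify cleanly: $\alpha^{-m}=\alpha$ and $(-\beta)^{-m}=-\beta$, while $1/n^{2m+1}=n$, so each bracketed expression becomes $\tfrac{H}{2}\zeta_\K(-1)+\mathcal{C}_\K\sum_{n\ge 1} nV_\K(n)\Omega_\K(\cdot)$. The crucial observation is that the finite sum $\sum_{j=0}^{m+1}$ on the right of \eqref{gen formula eqn} has upper limit $0$ at $m=-1$, so it collapses to the single term $j=0$, contributing $(-1)^{-1}\zeta_\K(0)\zeta_\K(0)=-\zeta_\K^2(0)$, while the denominator $\pi^{(2m+1)d+1}$ becomes $\pi^{1-d}$. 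Transferring the $(-\beta)$-bracket to the left-hand side and dividing through by $\mathcal{C}_\K$ then produces \eqref{quasi2}.

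The surviving $j=0$ term is structurally the precise analogue of the $6z/(\pi i)$ correction that obstructs modularity of the classical weight-two Eisenstein series $E_2$: for any $m\ge 2$, the corresponding finite sum is empty and one recovers clean modularity as in Theorem \ref{eisen trans symm}; only at $m=-1$ (equivalently $k=2$) does the single boundary term survive to obstruct modularity. The main obstacle is therefore not analytic but algebraic: one must carefully track the sign $(-1)^{m+j}$, the degeneracy of the finite sum, and the correct translation of $\alpha,\beta$ into the $z$-variable in order to obtain the quasi-modular transformation \eqref{quasi1}. Because the entire content of Theorem \ref{quasi} is packaged inside Theorem \ref{gen formula}, no fresh contour integration or Meijer-$G$ identity is required beyond what is used to establish \eqref{gen formula eqn}.
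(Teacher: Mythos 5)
Your proposal is correct and follows essentially the same route as the paper: specialize Theorem \ref{gen formula} at $m=-1$ (the paper's proof writes ``$m=1$,'' evidently a typo, since only $m=-1$ produces the weights $nV_\K(n)$ appearing in \eqref{quasi2}), observe that the finite sum degenerates to the single $j=0$ term $-\zeta_\K^2(0)/\pi^{1-d}$, and then pass to \eqref{quasi1} via $\alpha=-i\pi^d z$, $\beta=i\pi^d/z$. The only quibble is bookkeeping: the rescaling factor you name should be $z/(i\pi^d)$ rather than its reciprocal, and dividing by $\mathcal{C}_\K$ as you describe leaves a $\mathcal{C}_\K$ in the denominator of the $\zeta_\K^2(0)$ term that is absent from the stated \eqref{quasi2} --- a constant-tracking discrepancy already present in the paper's own statement, not a gap in your argument.
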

\begin{remark}
If $\K$ is not $\Q$ or an imaginary quadratic field, it is well-known that $\zeta_\K(0)$ vanishes, thus  $\mathcal{G}_{2, \K}(z)$ satisfies \eqref{2property}. 
\end{remark}
The final result generalizes the transformation formula satisfied by the logarithm of the Dedekind eta function \cite[Equation (3.10)]{BLS} for an arbitrary number field $\K$.
\begin{theorem}\label{Generalized Dedekind eta transformation}
Let $\K$ be an arbitrary number field. For $\mathrm{Re}(\alpha), \mathrm{Re}(\beta)>0$ with $\alpha\beta=\pi^{2d}$, we have
\begin{align}
\sum_{n=1}^\infty\frac{V_\K(n)}{n} \Omega_\K\left(\frac{2^d n \alpha}{D_{\K}}\right) - \sum_{n=1}^\infty\frac{V_\K(n)}{n}\Omega_\K\left(\frac{2^d n \beta}{D_{\K}}\right) = \frac{H^2}{4\mathcal{C}_{\K}} \log\left( \frac{\alpha}{\beta}\right) + \frac{\zeta_{\K}(0) \zeta_{\K}(2)}{\pi^{d+1}\mathcal{C}_{\K} } (\alpha- \beta).\nonumber
\end{align}
\end{theorem}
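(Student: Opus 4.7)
The plan is to obtain Theorem \ref{Generalized Dedekind eta transformation} as the limiting case $m \to 0$ of the Ramanujan-type identity \eqref{gen formula eqn}. Although Theorem \ref{gen formula} is stated only for non-zero integers $m$, the same Mellin--Barnes derivation extends the identity to a meromorphic one in the complex parameter $m$; the case $m=0$ is singular precisely because $\zeta_\K(2m+1)$ has a simple pole there with residue $H/2$, but the singular pieces on the two sides of \eqref{gen formula eqn} cancel, leaving exactly the finite identity we want.

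Concretely, I would use the Laurent expansion $\zeta_\K(2m+1) = \frac{H}{2m} + \gamma_0 + O(m)$ together with $\alpha^{-m} = 1 - m\log\alpha + O(m^2)$ and $(-\beta)^{-m} = 1 - m\log\beta + O(m^2)$ (taking the symmetric branch $(-1)^{-m} = \cos(\pi m)$, which agrees with the integer values appearing in \eqref{gen formula eqn} and makes the resulting identity real). I would then split the LHS of \eqref{gen formula eqn} as
\[
\alpha^{-m}\bigl[A_\alpha(m) - A_\beta(m)\bigr] + \bigl[\alpha^{-m} - (-\beta)^{-m}\bigr]A_\beta(m),
\]
where $A_\alpha(m) := \tfrac{H}{2}\zeta_\K(2m+1) + \mathcal{C}_\K \sum_{n\ge 1}\tfrac{V_\K(n)}{n^{2m+1}}\Omega_\K\!\left(\tfrac{2^d n\alpha}{D_\K}\right)$. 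The $\zeta$-singularity cancels inside $A_\alpha - A_\beta$, so the first group has the finite limit $\mathcal{C}_\K\bigl[\Sigma_\alpha(0) - \Sigma_\beta(0)\bigr]$, which is $\mathcal{C}_\K$ times the desired LHS of our theorem. In the second group, $A_\beta(m) \sim H^2/(4m)$ and $\alpha^{-m} - (-\beta)^{-m} \sim -m\log(\alpha/\beta)$, so the product contributes $-\tfrac{H^2}{4}\log(\alpha/\beta)$ in the limit.

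Next I would evaluate the RHS of \eqref{gen formula eqn} at $m=0$: only the terms $j=0,1$ survive in the sum, the prefactor is $\pi^{-d-1}$, and we get $\tfrac{\zeta_\K(0)\zeta_\K(2)}{\pi^{d+1}}(\alpha-\beta)$. Equating the $m\to 0$ limits of the two sides and rearranging yields the stated identity.

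The main obstacle is the analytic-continuation step: strictly speaking \eqref{gen formula eqn} is proved only at integer $m$, so one must either revisit the Mellin--Barnes derivation of Theorem \ref{gen formula} to verify that nothing breaks when $m$ is a complex parameter, and to pin down that the branch of $(-\beta)^{-m}$ emerging from the functional equation is indeed the one yielding a real logarithmic contribution $-\tfrac{H^2}{4}\log(\alpha/\beta)$. Alternatively, the theorem can be proved from scratch by the same contour-integral technique applied directly to the kernel $\zeta_\K(s)\zeta_\K(s+1)$ times the appropriate gamma factors and $\alpha^{-s}$: the simple pole of $\zeta_\K(s)$ at $s=1$ and of $\zeta_\K(s+1)$ at $s=0$ produce the $\zeta_\K(0)\zeta_\K(2)(\alpha-\beta)$ contribution, the expansion $\alpha^{-s} = 1 - s\log\alpha + O(s^2)$ about $s=0$ produces the $\log\alpha$, and the functional equation of $\zeta_\K$ brings in the matching $-\log\beta$ to assemble the $\log(\alpha/\beta)$ term.
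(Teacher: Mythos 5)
Your primary route --- obtaining the identity as the limit $m\to 0$ of \eqref{gen formula eqn} --- has a genuine gap that you flag but do not close. Theorem \ref{gen formula} is established only for non-zero \emph{integers} $m$, and $m=0$ is not a limit point of $\Z\setminus\{0\}$, so there is no limit to take until the identity has been extended to a punctured neighbourhood of $m=0$ in the continuous parameter. That extension is not routine: in the contour-shift derivation the factor $(-1)^m$ appearing in $R_{-2m}$ and in $I(y)$ comes from $\cos\bigl(\tfrac{\pi(-2m-w)}{2}\bigr)=(-1)^m\cos\bigl(\tfrac{\pi w}{2}\bigr)$, which fails for non-integer $m$ (an extra $\sin(\pi m)\sin(\pi w/2)$ cross-term appears, so the reflected integral no longer reproduces the companion series with a clean $(-\beta)^{-m}$ prefactor), and the finite sum $\sum_{j=0}^{m+1}$ has no canonical meaning off the integers. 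Your formal Laurent computation is, for what it is worth, consistent: the cancellation of the $H/(2m)$ singularity against $\alpha^{-m}-(-\beta)^{-m}\sim -m\log(\alpha/\beta)$ does yield $-\tfrac{H^2}{4}\log(\alpha/\beta)$, and the $j=0,1$ terms give $\tfrac{\zeta_\K(0)\zeta_\K(2)}{\pi^{d+1}}(\alpha-\beta)$. But as written this is a heuristic confirming the shape of the answer, not a proof.

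The ``alternative'' you relegate to your final paragraph is in fact the paper's actual proof: one applies Lemma \ref{inverse mellin transform of omega} and the functional equation in the form \eqref{equv fe1} to write $\sum_n V_\K(n)n^{-1}\Omega_\K(ny)$ as a Mellin--Barnes integral of $\zeta_\K(1+s)\zeta_\K(1-s)/\cos(\pi s/2)$, shifts the contour from $c>1$ to $-3<\mu<-1$, and reflects $s\mapsto -s$ to recover the series at the conjugate argument. If you pursue this, note that your accounting of the residues is off: the two simple poles of $\zeta_\K(1+s)$ and $\zeta_\K(1-s)$ coalesce into a \emph{double} pole at $s=0$, and it is this double pole, combined with the expansion $X^{-s}=1-s\log X+O(s^2)$, that produces the $H^2\log$ term; the $\zeta_\K(0)\zeta_\K(2)(\alpha-\beta)$ contributions come not from poles of the zeta factors but from the simple zeros of $\cos(\pi s/2)$ at $s=\pm 1$. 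With the residue computation done at those three points and the horizontal segments handled via Lemma \ref{Convexity bound} and Stirling, the argument is complete and coincides with the paper's.
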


\section{Preliminaries}\label{pr}
In this section, we will gather known results from the literature that will be used in the subsequent proofs. One such result is the functional equation satisfied by the Dedekind zeta function $\zeta_\K(s)$, which is given by: \cite[pp. 254-255]{Lang}.
\begin{equation}\label{functional equation}
\zeta_\K(s) = D_\K^{\frac{1}{2}-s}2^{ds-r_2}\pi^{ds-r_1-r_2}\frac{\Gamma(1-s)^{r_1+r_2}}{\Gamma(s)^{r_2}}\sin\left(\frac{\pi s}{2}\right)^{r_1}\zeta_\K(1-s).
\end{equation}
The following lemma gives the convexity bound of the Dedekind zeta function in the critical region.
\begin{lemma}\label{Convexity bound}
Let $\K$ be any number field. Then
\begin{equation*}
\zeta_\K(\sigma+it) \ll \begin{cases}
|t|^{\frac{d}{2}-d\sigma+\epsilon} D_\K^{\frac{1}{2}-\sigma+\epsilon}, & \text{if} \, \ \sigma \leq 0, \\
|t|^{\frac{d(1-\sigma)}{2}+\epsilon} D_\K^{\frac{1-\sigma}{2}+\epsilon}, & \text{if} \, \ 0 \leq \sigma \leq 1,\\
|t|^\epsilon D_\K^\epsilon, & \text{if} \, \ \sigma \geq 1
\end{cases}
\end{equation*}
holds true for any $\epsilon>0$. 
\end{lemma}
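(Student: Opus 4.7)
The plan is to establish this standard convexity estimate by the classical three-step recipe: bound $\zeta_\K$ in the half-plane of (effective) absolute convergence $\sigma \ge 1$; reflect via the functional equation \eqref{functional equation} and Stirling's formula to obtain a bound in $\sigma \le 0$; and interpolate across the critical strip $0\le\sigma\le 1$ by the Phragm\'en--Lindel\"of principle.

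For $\sigma \ge 1$, I would appeal to a Rademacher-type estimate $\zeta_\K(\sigma+it)\ll_\epsilon (D_\K(1+|t|))^\epsilon$, which is uniform in $\K$. For $\sigma\ge 1+\delta$ this is immediate from the Dirichlet series together with the divisor-sum bound $\sum_{n\le x}V_\K(n)\ll x(\log x)^{d-1}$; on the line $\sigma=1$ it requires truncating the series at $X\asymp (D_\K(1+|t|))^A$ for some sufficiently large absolute constant $A$, estimating the head by partial summation, and controlling the tail through a smoothed contour shift into the region of absolute convergence. This already gives the third case of the lemma.

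For $\sigma\le 0$, I would apply \eqref{functional equation} directly. Stirling's formula for $|t|$ large gives
$$\left|\frac{\Gamma(1-s)^{r_1+r_2}}{\Gamma(s)^{r_2}}\right|\asymp |t|^{d\left(\frac{1}{2}-\sigma\right)}\exp\!\left(-\frac{\pi r_1|t|}{2}\right),$$
while $|\sin(\pi s/2)|^{r_1}\asymp 2^{-r_1}\exp(\pi r_1|t|/2)$, so the exponentials cancel exactly, leaving the net polynomial factor $|t|^{d(1/2-\sigma)}$. Combined with the explicit prefactor $D_\K^{1/2-\sigma}$ in \eqref{functional equation} and the estimate on $\zeta_\K(1-s)$ from the previous step (applied at real part $1-\sigma\ge 1$), this yields precisely $|t|^{d/2-d\sigma+\epsilon}D_\K^{1/2-\sigma+\epsilon}$, which is the first case.

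For $0\le\sigma\le 1$, set $f(s)=(s-1)\zeta_\K(s)$; this is entire and of finite order on any vertical strip. The two boundary bounds just obtained translate into $|f(1+it)|\ll (D_\K(1+|t|))^{1+\epsilon}$ and $|f(it)|\ll D_\K^{1/2+\epsilon}(1+|t|)^{d/2+1+\epsilon}$, and the Phragm\'en--Lindel\"of principle then linearly interpolates the logarithms, producing $|f(\sigma+it)|\ll D_\K^{(1-\sigma)/2+\epsilon}(1+|t|)^{d(1-\sigma)/2+1+\epsilon}$; dividing by $|s-1|$ gives the middle case. The main obstacle is securing a $D_\K$-uniform bound of the shape $D_\K^\epsilon$ at $\sigma=1$, since any power of $\log D_\K$ here would get amplified by Phragm\'en--Lindel\"of and spoil the interpolated exponent; this is exactly where the Rademacher-style truncation (rather than a crude absolute-convergence estimate) is indispensable.
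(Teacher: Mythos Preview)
Your proposal is correct and follows exactly the approach the paper indicates: the paper's own proof is simply a one-line reference to the standard Phragm\'en--Lindel\"of-plus-functional-equation argument (citing \cite[Chapter~5]{Iwaniec}), and you have supplied precisely those details. Your Stirling computation for $\sigma\le 0$ and the interpolation via $f(s)=(s-1)\zeta_\K(s)$ are the standard steps behind that citation.
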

\begin{proof}
This follows from a standard argument by applying the Phragmen-Lindel\"of principle and the functional equation \eqref{functional equation} of the Dedekind zeta function. The details may be found in \cite[Chapter 5]{Iwaniec}, for example.
\end{proof}

The well-known duplication and reflection formulas of the gamma function \cite[p.~46, Equations (3.4) and (3.5)]{temme} are as following
\begin{align}
\Gamma(s)\Gamma\left(s+\frac{1}{2}\right)&=\frac{\sqrt{\pi}}{2^{2s-1}}\Gamma(2s)\label{duplication}\\
\Gamma(s)\Gamma(1-s)&=\frac{\pi}{\sin(\pi s)}.\label{reflection}
\end{align}

We now wrote down the definitions of the Bessel functions. The Bessel functions of the first kind and the second kind of order $\nu$ are defined by \cite[p.~40, 64]{watson-1944a}
\begin{align}
	J_{\nu}(z)&:=\sum_{m=0}^{\infty}\frac{(-1)^m(z/2)^{2m+\nu}}{m!\Gamma(m+1+\nu)} \hspace{9mm} (z,\nu\in\mathbb{C}),\nonumber\\
	Y_{\nu}(z)&:=\frac{J_{\nu}(z)\cos(\pi \nu)-J_{-\nu}(z)}{\sin{\pi \nu}}\hspace{5mm}(z\in\mathbb{C}, \nu\notin\mathbb{Z}),\nonumber
	\end{align}
	along with $Y_n(z)=\lim_{\nu\to n}Y_\nu(z)$ for $n\in\mathbb{Z}$. 
The modified Bessel functions of the first and second kinds are defined by \cite[p.~77, 78]{watson-1944a}
\begin{align}
I_{\nu}(z)&:=
\begin{cases}
e^{-\frac{1}{2}\pi\nu i}J_{\nu}(e^{\frac{1}{2}\pi i}z), & \text{if $-\pi<\arg(z)\leq\frac{\pi}{2}$,}\nonumber\\
e^{\frac{3}{2}\pi\nu i}J_{\nu}(e^{-\frac{3}{2}\pi i}z), & \text{if $\frac{\pi}{2}<\arg(z)\leq \pi$,}
\end{cases}\nonumber\\
K_{\nu}(z)&:=\frac{\pi}{2}\frac{I_{-\nu}(z)-I_{\nu}(z)}{\sin\nu\pi}\label{kbesse}
\end{align}
respectively. When $\nu\in\mathbb{Z}$, $K_{\nu}(z)$ is interpreted as a limit of the right-hand side of \eqref{kbesse}. 

We next define the Meijer $G$-function. The G-function was introduced initially by Meijer as a very general function using a series. Later, it was defined more generally via a line integral in the complex plane (cf. \cite{erd1}) given by 
\begin{equation}\label{G-function}
\begin{aligned}
G^{m, \ n}_{p, \ q}\bigg(\begin{matrix}
a_1, \ldots, a_p \\
b_1, \ldots, b_q
\end{matrix} \ \bigg|\ z\bigg)=\frac{1}{2\pi i}\underset{{(C)}}{\bigints} \frac{\prod\limits_{j=1}^m\Gamma(b_j-s)\prod\limits_{j=1}^n\Gamma(1-a_j+s)}{\prod\limits_{j=m+1}^q\Gamma(1-b_j+s)\prod\limits_{j=n+1}^p\Gamma(a_j-s)}z^s \rm{d}s ,
\end{aligned}
\end{equation}
where $z \neq 0$ and $m$, $n$, $p$, $q$ are integers which satisfy $0 \leq m \leq q$ and $0 \leq n \leq p$. The poles of the integrand must be all simple. Here $(C)$ in the integral denotes the vertical line from $C-i\infty$ to $C+i\infty$  such that all poles of $\Gamma(b_j-s)$ for $ j=1, \ldots, m$, must lie on one side of the vertical line while all poles of $\Gamma(1-a_j+s)$ for $ j=1, \ldots, n$ must lie on the other side. The integral then converges for $|\arg z| < \delta \pi$ where
$$\delta = m+n - \frac{1}{2}(p+q).$$
The integral additionally converges for $|\arg z|= \delta \pi$ if $(q-p)(\Re(s) + 1/2) > \Re (v) + 1$, where 
$$
v = \sum_{j=1}^{q}b_j - \sum_{j=1}^{p}a_j.
$$
Special cases of the $G$-function include many other special functions. 

\section{Transformation formulas for odd zeta values over an arbitrary number field}
In this section, we will prove all the results presented in the introduction. We commence with a lemma that will be utilized as a crucial step in establishing our general Theorem \ref{gen formula}.
\begin{lemma}\label{inverse mellin transform of omega}
For $c=\mathrm{Re}(s)>1$, we have
\begin{align}
\Omega_\K(x)=\frac{1}{2\pi i}\int_{(c)}\frac{\zeta_\K(s)\Gamma(s)^{r_1+r_2}}{\Gamma(1-s)^{r_2}}\cos\left(\frac{\pi s}{2}\right)^{r_1-1}x^{-s}ds.\nonumber
\end{align}
Here, and throughout, $\int_{(c)}ds$ denotes the line integral $\int_{c-i\infty}^{c+i\infty}ds$ with $c=\mathrm{Re}(s)$.
\end{lemma}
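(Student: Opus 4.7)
First, I would use $c>1$ to expand $\zeta_\K(s)=\sum_{j=1}^\infty V_\K(j)\,j^{-s}$ by \eqref{dedekind defn} and interchange the sum with the integral. The interchange is justified by Stirling's asymptotics, which give exponential decay in $|t|$ of $\Gamma(s)^{r_1+r_2}\cos(\pi s/2)^{r_1-1}/\Gamma(1-s)^{r_2}$ along vertical lines, easily dominating the polynomial growth of $\zeta_\K$ provided by Lemma \ref{Convexity bound}. The problem thus reduces to showing, for every $j\ge 1$,
\begin{equation*}
\frac{1}{2\pi i}\int_{(c)}\frac{\Gamma(s)^{r_1+r_2}}{\Gamma(1-s)^{r_2}}\cos\!\left(\tfrac{\pi s}{2}\right)^{r_1-1}(xj)^{-s}\,ds\;=\;\frac{2^{1-r_1-r_2}}{\pi^{1-r_1/2}}\;G^{d+1,\,0}_{0,\,2d}\!\left(\tfrac{x^2j^2}{4^d}\right),
\end{equation*}
where the Meijer $G$-function carries the $b$-parameters listed in \eqref{omega definition}.

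Next, I would massage the integrand using only the reflection \eqref{reflection} and the duplication \eqref{duplication} formulas. An application of \eqref{reflection} to $1/\Gamma(1-s)^{r_2}$, combined with $d=r_1+2r_2$, collapses the ratio $\Gamma(s)^{r_1+r_2}/\Gamma(1-s)^{r_2}$ to $\Gamma(s)^{d}\sin(\pi s)^{r_2}/\pi^{r_2}$, and the double-angle identity $\sin\pi s=2\sin(\pi s/2)\cos(\pi s/2)$ then merges the resulting $\cos(\pi s/2)^{r_2}$ with the ambient $\cos(\pi s/2)^{r_1-1}$ to produce $\cos(\pi s/2)^{r_1+r_2-1}$. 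Now apply \eqref{duplication} to $\Gamma(s)^d$ and reflection a second time to rewrite $\sin(\pi s/2)=\pi/[\Gamma(s/2)\Gamma(1-s/2)]$ and $\cos(\pi s/2)=\pi/[\Gamma(1/2+s/2)\Gamma(1/2-s/2)]$. Every $\Gamma$ then acquires argument of the form $\pm s/2+\{0,1/2,1\}$, and the accumulated constants collapse (using $d=r_1+2r_2$) so that the integrand takes the form
\begin{equation*}
\frac{2^{ds-r_1-r_2}}{\pi^{1-r_1/2}}\cdot\frac{\Gamma(s/2)^{r_1+r_2}\Gamma(s/2+1/2)^{r_2+1}}{\Gamma(1-s/2)^{r_2}\Gamma(1/2-s/2)^{r_1+r_2-1}}\cdot(xj)^{-s}.
\end{equation*}
The substitution $s=-2v$ absorbs the $2^{ds}$ into $(xj)^{-s}$ to produce $(x^2j^2/4^d)^{v}$; matching termwise against the Mellin--Barnes definition \eqref{G-function} with $(m,n,p,q)=(d+1,0,0,2d)$ and the prescribed $b_j$'s identifies the resulting integral with the Meijer $G$-function in \eqref{omega definition}.

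The main obstacle is the careful bookkeeping of the powers of $2$ and $\pi$ through the three successive applications of duplication and reflection, together with tracking the contour direction under $s=-2v$ so that the resulting Mellin--Barnes integral separates the poles of $\Gamma(-v)^{r_1+r_2}\Gamma(1/2-v)^{r_2+1}$ from those of $\Gamma(1/2+v)^{r_1+r_2-1}\Gamma(1+v)^{r_2}$ in the manner required by \eqref{G-function}; since the latter factors sit in the denominator, their apparent poles disappear, leaving all genuine poles on the positive real $v$-axis, which is trivially separated from the shifted contour $\mathrm{Re}(v)=-c/2<0$. Convergence of the resulting Meijer $G$ integral is automatic from $\delta=m+n-(p+q)/2=(d+1)-d=1>0$, which guarantees $|\arg(x^2j^2/4^d)|<\pi$ is sufficient---valid in particular for positive real $x$.
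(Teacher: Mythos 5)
Your proposal is correct and is essentially the paper's own proof run in the opposite direction: the paper starts from the Meijer $G$-function in \eqref{omega definition}, substitutes $s\mapsto -s/2$, and applies the duplication and reflection formulas \eqref{duplication}--\eqref{reflection} to reach the kernel $\Gamma(s)^{r_1+r_2}\cos(\pi s/2)^{r_1-1}/\Gamma(1-s)^{r_2}$, then interchanges sum and integral to assemble $\zeta_\K(s)$, whereas you begin at the target integral and reverse these exact steps. The constants $2^{ds-r_1-r_2}$ and $\pi^{r_1/2-1}$ and the parameter counts $(r_1+r_2,\,r_2+1;\,r_1+r_2-1,\,r_2)$ in your intermediate integrand all check out, so no gap.
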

\begin{proof}
The definition of Meijer G-function, given in \eqref{G-function}, implies that
\begin{align}\label{mg}
\MeijerG*{d+1}{0}{0}{2d}{-}{(0)_{r_1+r_2},\left(\frac{1}{2}\right)_{r_2+1};\left(\frac{1}{2}\right)_{r_1+r_2-1},(0)_{r_2}}{\frac{x^2j^2}{4^d}}&=\frac{1}{2\pi i}\int_{(L)}\frac{\Gamma(-s)^{r_1+r_2}\Gamma\left(\frac{1}{2}-s\right)^{r_2+1}}{\Gamma\left(\frac{1}{2}+s\right)^{r_1+r_2-1}\Gamma(1+s)^{r_2}}\left(\frac{x^2j^2}{4^d}\right)^s ds,
\end{align}
where $L=\mathrm{Re}(s)<-\frac{1}{2}$, We next replace $s$ by $-s/2$ in \eqref{mg} so as to obtain for $c=\mathrm{Re}(s)>1$, 
\begin{align}\label{refle}
\MeijerG*{d+1}{0}{0}{2d}{-}{(0)_{r_1+r_2},\left(\frac{1}{2}\right)_{r_2+1};\left(\frac{1}{2}\right)_{r_1+r_2-1},(0)_{r_2}}{\frac{x^2j^2}{4^d}}&=\frac{1}{4\pi i}\int_{(c)}\frac{\Gamma\left(\frac{s}{2}\right)^{r_1+r_2}\Gamma\left(\frac{1}{2}+\frac{s}{2}\right)^{r_2+1}}{\Gamma\left(\frac{1}{2}-\frac{s}{2}\right)^{r_1+r_2-1}\Gamma\left(1-\frac{s}{2}\right)^{r_2}}\left(\frac{xj}{2^d}\right)^{-s} ds\nonumber\\
&=\frac{1}{4\pi i}\int_{(c)}\frac{\left(\Gamma\left(\frac{s}{2}\right)\Gamma\left(\frac{1}{2}+\frac{s}{2}\right)\right)^{r_1+r_2}\Gamma\left(\frac{1}{2}+\frac{s}{2}\right)^{1-r_1}}{\left(\Gamma\left(\frac{1}{2}-\frac{s}{2}\right)\Gamma\left(1-\frac{s}{2}\right)\right)^{r_2}\Gamma\left(\frac{1}{2}-\frac{s}{2}\right)^{r_1-1}}\left(\frac{xj}{2^d}\right)^{-s} ds.
\end{align}
Invoking the duplication and reflection formulas \eqref{duplication} and \eqref{reflection} in \eqref{refle} and simplifying, we deduce that
\begin{align}\label{G mellin}
\MeijerG*{d+1}{0}{0}{2d}{-}{(0)_{r_1+r_2},\left(\frac{1}{2}\right)_{r_2+1};\left(\frac{1}{2}\right)_{r_1+r_2-1},(0)_{r_2}}{\frac{x^2j^2}{4^d}}&=\frac{2^{r_1+r_2-1}}{\pi^{\frac{r_1}{2}-1}}\frac{1}{2\pi i}\int_{(c)}\frac{\Gamma(s)^{r_1+r_2}}{\Gamma(1-s)^{r_2}}\cos\left(\frac{\pi s}{2}\right)^{r_1-1}(xj)^{-s}ds.
\end{align}
We substitute the above expression in \eqref{omega definition} to see that
\begin{align}
\Omega_{\mathbb{K}}(x)=\sum_{j=1}^\infty V_\K(j)\frac{1}{2\pi i}\int_{(c)}\frac{\Gamma(s)^{r_1+r_2}}{\Gamma(1-s)^{r_2}}\cos\left(\frac{\pi s}{2}\right)^{r_1-1}(xj)^{-s}ds.\nonumber
\end{align}
Now, by interchanging the order of summation and integration in the above equation, which is justified due to \cite[p.~30, Theorem 2.1]{temme}, and subsequently using \eqref{dedekind defn}, we arrive at the lemma.
\end{proof}

We now provide a proof of our general transformation formula over arbitrary number field.
\begin{proof}[Theorem \textup{\ref{gen formula}}][]
To begin, we establish the convergence of the series on both sides of \eqref{gen formula eqn}. Notably, it suffices to demonstrate the convergence of the series in \eqref{Lamber series numberfield}. From \eqref{G mellin}, for $c>1$ and large $j$, we have
\begin{align}\label{G bound}
\left|\MeijerG*{d+1}{0}{0}{2d}{-}{(0)_{r_1+r_2},\left(\frac{1}{2}\right)_{r_2+1};\left(\frac{1}{2}\right)_{r_1+r_2-1},(0)_{r_2}}{\frac{x^2j^2}{4^d}}\right|&<\frac{2^{r_1+r_2-1}}{\pi^{\frac{r_1}{2}-1}(xj)^c}\int_{-\infty}^\infty\left|\frac{\Gamma(c+it)^{r_1+r_2}}{\Gamma(1-c-it)^{r_2}}\cos\left(\frac{\pi (c+it)}{2}\right)^{r_1-1}dt\right|\nonumber\\
&\ll j^{-c},
\end{align}
which follows upon observing that the integral on the right-hand side of the initial step converges, a fact that can be verified using the Stirling formula \cite[p.~224]{cop}, namely, for $p\leq\sigma\leq q$,
\begin{equation}\label{strivert}
  |\Gamma(s)|=\sqrt{2\pi}|t|^{\sigma-\frac{1}{2}}e^{-\frac{1}{2}\pi |t|}\left(1+O\left(\frac{1}{|t|}\right)\right)
\end{equation}
as $|t|\to \infty$. Utilizing the bound given in \eqref{G bound} in conjunction with \cite[p.~430, Corollary 7.119]{bordell}, we establish the well-definedness of $\Omega_\K(x)$ defined in \eqref{omega definition}. Furthermore, on account of the formula \eqref{strivert} and Lemma \ref{inverse mellin transform of omega}, for $c>\max\{1,1+\mathrm{Re}(a)\}$, we can deduce that, 
\begin{align*}
\Omega_\K\left(ny\right)\ll_y n^{-c},
\end{align*}
as $n\to\infty$. Thus, we obtain
\begin{align*}
\sum_{n=1}^\infty V_\K(n)n^a\Omega_\K\left(ny\right)\ll\sum_{n=1}^\infty\frac{V_\K(n)}{n^{c-a}}.
\end{align*}
As the series on the right-hand side converges, we have established our claim.

We now prove the transformation formula \eqref{gen formula eqn}. Invoking Lemma \ref{inverse mellin transform of omega} in the first step below, we have
\begin{align}\label{L1}
\sum_{n=1}^\infty\frac{V_\K(n)\Omega_\K(ny)}{n^{2m+1}}&=\sum_{n=1}^\infty\frac{V_\K(n)}{n^{2m+1}}\frac{1}{2\pi i}\int_{(c)}\zeta_\K(s)\frac{\Gamma(s)^{r_1+r_2}}{\Gamma(1-s)^{r_2}}\cos\left(\frac{\pi s}{2}\right)^{r_1-1}(ny)^{-s}ds\nonumber\\
&=\frac{1}{2\pi i}\int_{(c)}\zeta_\K(2m+1+s)\zeta_\K(s)\frac{\Gamma(s)^{r_1+r_2}}{\Gamma(1-s)^{r_2}}\cos\left(\frac{\pi s}{2}\right)^{r_1-1}y^{-s}ds,
\end{align} 
where in the last step we first interchanged the order of summation and integration which can be justified by using \cite[p.~30, Theorem 2.1]{temme}, and then used \eqref{dedekind defn}. The functional equation \eqref{functional equation} can be written in the form 
\begin{align}\label{equv fe1}
\frac{\Gamma(s)^{r_1+r_2}}{\Gamma(1-s)^{r_2}}\cos\left(\frac{\pi s}{2}\right)^{r_1-1}\zeta_\K(s)=\frac{D_\K^{\frac{1}{2}-s}2^{r_2-d(1-s)}\pi^{r_1+r_2-d(1-s)}}{\cos\left(\frac{\pi s}{2}\right)}\zeta_\K(1-s),
\end{align}
which transforms \eqref{L1} into
\begin{align}\label{L(y) identity}
\sum_{n=1}^\infty\frac{V_\K(n)\Omega_\K(ny)}{n^{2m+1}}=\sqrt{D_\K}2^{r_2-d}\pi^{r_1+r_2-d}\frac{1}{2\pi i}\int_{(c)}\frac{\zeta_\K(2m+1+s)\zeta_\K(1-s)}{\cos\left(\frac{\pi s}{2}\right)}\left(\frac{yD_\K}{(2\pi)^d}\right)^{-s}ds.
\end{align}
We next shift the line of integration from $c>1$ to $-2m-3<\mu=\mathrm{Re}(s)<-2m-1$. To that end, let us consider the positively oriented contour formed by the line segments $[\mu-iT,c-iT],\ [c-iT,c+iT],\ [c+iT,\mu+iT],\ [\mu+iT,\mu-iT]$. Note that the integrand has simple poles at $s=0, -2m$ and $s=1-2k$, for $0\leq k\leq m+1$. By Cauchy's residue theorem, we have 
\begin{align}\label{cauchy}
&\frac{1}{2\pi i}\left(\int_{c-iT}^{c+iT}+\int_{c+iT}^{\mu+iT}+\int_{\mu+iT}^{\mu-iT}+\int_{\mu-iT}^{c-iT}\right)\frac{\zeta_\K(2m+1+s)\zeta_\K(1-s)}{\cos\left(\frac{\pi s}{2}\right)}\left(\frac{yD_\K}{(2\pi)^d}\right)^{-s}ds\nonumber\\
&=R_0+R_{-2m}+\sum_{k=0}^{m+1}R_{1-2k}.
\end{align}
It is easy to see that the integrals along the horizontal segments vanish as $T\to\infty$ with the help of Lemma \ref{Convexity bound} and the Stirling's formula \eqref{strivert} since the cosine function reduces to the gamma factors using \eqref{reflection}.

Therefore, using the above fact, \eqref{L(y) identity} and \eqref{cauchy}, we arrive at
\begin{align}\label{gen cauchy}
\sum_{n=1}^\infty\frac{V_\K(n)\Omega_\K(ny)}{n^{2m+1}}=\sqrt{D_\K}2^{r_2-d}\pi^{r_1+r_2-d}\left(R_0+R_{-2m}+\sum_{k=0}^{m+1}R_{1-2k}+I(y)\right),
\end{align}
where 
\begin{align}\label{defnI1}
I(y):=\frac{1}{2\pi i}\int_{(\mu)}\frac{\zeta_\K(2m+1+s)\zeta_\K(1-s)}{\cos\left(\frac{\pi s}{2}\right)}\left(\frac{yD_\K}{(2\pi)^d}\right)^{-s}ds.
\end{align}
The residues involved in \eqref{gen cauchy} can be easily evaluated to
\begin{align}\label{residues gen}
R_0&=-H\zeta_\K(2m+1)\nonumber\\
R_{-2m}&=(-1)^{m}H\zeta_\K(2m+1)\left(\frac{yD_\K}{(2\pi)^d}\right)^{2m}\nonumber\\
R_{1-2k}&=(-1)^{k+1}\frac{2}{\pi}\zeta_\K(2m-2k+2)\zeta_\K(2k)\left(\frac{yD_\K}{(2\pi)^d}\right)^{2k-1}.
\end{align}
Employing the change of variable $s=-2m-w$ in \eqref{defnI1}, we have, for $1<c=\mathrm{Re}(s)<3$,
\begin{align}\label{I(y)1}
I(y)&=(-1)^m\left(\frac{yD_\K}{(2\pi)^d}\right)^{2m}\frac{1}{2\pi i}\int_{(c)}\frac{\zeta_\K(1-w)\zeta_\K(2m+1+w)}{\cos\left(\frac{\pi w}{2}\right)}\left(\frac{yD_\K}{(2\pi)^d}\right)^{w}ds\nonumber\\
&=(-1)^m\left(\frac{yD_\K}{(2\pi)^d}\right)^{2m}\left(\sqrt{D_\K}2^{r_2-d}\pi^{r_1+r_2-d}\right)^{-1}\sum_{n=1}^\infty\frac{V_\K(n)}{n^{2m+1}}\Omega_\K\left(\frac{n(2\pi)^{2d}}{yD_\K^2}\right),
\end{align}
where we used \eqref{L(y) identity}. Now equations \eqref{gen cauchy} \eqref{residues gen} and \eqref{I(y)1} together yield
\begin{align}
\sum_{n=1}^\infty\frac{V_\K(n)\Omega_\K(ny)}{n^{2m+1}}&=\sqrt{D_\K}2^{r_2-d}\pi^{r_1+r_2-d}\left\{-H\zeta_\K(2m+1)+(-1)^mH\zeta_\K(2m+1)\left(\frac{yD_\K}{(2\pi)^d}\right)^{2m}\right.\nonumber\\
&\quad\left.+\frac{2}{\pi}\sum_{k=0}^{m+1}(-1)^{k+1}\zeta_\K(2m-2k+2)\zeta_\K(2k)\left(\frac{yD_\K}{(2\pi)^d}\right)^{2k-1}\right\}\nonumber\\
&\quad+(-1)^m\left(\frac{yD_\K}{(2\pi)^d}\right)^{2m}\sum_{n=1}^\infty\frac{V_\K(n)}{n^{2m+1}}\Omega_\K\left(\frac{n(2\pi)^{2d}}{yD_\K^2}\right).\nonumber
\end{align}
First let $y=\frac{2^d\alpha}{D_\K}$ with $\alpha\beta=\pi^{2d}$ in the above equation and then multiply both sides by $\alpha^{-m}$ and simplify to see that
\begin{align}\label{almost there}
&\alpha^{-m}\left\{\sqrt{D_\K}2^{r_2-d}\pi^{r_1+r_2-d}H\zeta_\K(2m+1)+\sum_{n=1}^\infty\frac{V_\K(n)}{n^{2m+1}}\Omega_\K\left(\frac{2^d n\alpha}{D_\K}\right)\right\}\nonumber\\
&=(-\beta)^{-m}\left\{\sqrt{D_\K}2^{r_2-d}\pi^{r_1+r_2-d}H\zeta_\K(2m+1)+\sum_{n=1}^\infty\frac{V_\K(n)}{n^{2m+1}}\Omega_\K\left(\frac{2^d n\beta}{D_\K}\right)\right\}\nonumber\\
&\qquad+\sqrt{D_\K}2^{r_2-d+1}\pi^{r_1+r_2-d-1}\alpha^{-m}\sum_{k=0}^{m+1}(-1)^{k+1}\zeta_\K(2m-2k+2)\zeta_\K(2k)\left(\frac{\alpha}{\pi^d}\right)^{2k-1}.
\end{align}
Finally \eqref{gen formula eqn} follows upon replacing $k$ by $m+1-k$ in the finite sum of \eqref{almost there} and multiplying both sides by $\mathcal{C_{\K}}$.
\end{proof}
We next prove \eqref{Ramanujan formula} from Theorem \ref{gen formula}.
\begin{proof}[Corollary \textup{\ref{cor1.2}}][]
Applying \cite[p.231, Equation (8)]{Luke}, the following Meijer $G$-function reduces to
\begin{align*}
\MeijerG*{2}{0}{0}{2}{-}{0,\frac{1}{2}}{\frac{x^2j^2}{4}}&= \sqrt{2xj} K_{1/2}(xj)\\
&= \sqrt{\pi}e^{-jx}.
\end{align*}
For $\K= \Q$, the degree $d$ is $1$ with $r_1=1$ and $r_2=0$. Thus definition \eqref{omega definition} yields 
\begin{equation}\label{OmegaQ}
\Omega_\K(x) = \frac{1}{e^x-1},
\end{equation}
which concludes our corollary.
\end{proof}

We next present the proof of the transformation formulas of $\zeta_\K(2m+1)$ for quadratic fields.
\begin{proof}[Corollary \textup{\ref{real case}}][]
To prove \eqref{real case eqn}, it is enough to see that $\Omega_\K(jx)=\mathcal{R}(jx)$ when we take $r_1=2$ and $r_2=0$ in Theorem \ref{gen formula}. To show that, note that, for $c>0$,
\begin{align}\label{cov}
\MeijerG*{3}{0}{0}{4}{-}{0,0,\frac{1}{2};\frac{1}{2}}{\frac{x^2j^2}{16}}&=\frac{1}{2\pi i}\int_{(c)}\frac{\Gamma\left(s\right)^2\Gamma\left(\frac{1}{2}+s\right)}{\Gamma\left(\frac{1}{2}-s\right)}\left(\frac{x^2j^2}{16}\right)^{-s}ds\nonumber\\
&=\frac{1}{2\pi i}\int_{(c)}\frac{(\Gamma\left(s\right)\Gamma\left(\frac{1}{2}+s\right))^2}{\Gamma\left(\frac{1}{2}-s\right)\Gamma\left(\frac{1}{2}+s\right)}\left(\frac{x^2j^2}{16}\right)^{-s}ds\nonumber\\
&=\frac{4}{2\pi i}\int_{(c)}\Gamma^2\left(2s\right)\cos\left(\pi s\right)\left(xj\right)^{-2s}ds,
\end{align}
which follows by applying \eqref{duplication} and \eqref{reflection}. Employing the change of variable $s\to s/2$ in \eqref{cov}, we obtain
\begin{align*}
\MeijerG*{3}{0}{0}{4}{-}{0,0,\frac{1}{2};\frac{1}{2}}{\frac{x^2j^2}{16}}&=\frac{2}{2\pi i}\int_{(2c)}\Gamma^2\left(s\right)\cos\left(\frac{\pi s}{2}\right)\left(xj\right)^{-s}ds\nonumber\\
&=\frac{1}{2}\left\{\frac{1}{2\pi i}\int_{(2c)}\Gamma^2\left(s\right)\left(e^{\frac{\pi i}{2}}jx\right)^{-s}ds+\frac{1}{2\pi i}\int_{(c)}\Gamma^2\left(s\right)\left(e^{-\frac{\pi i}{2}}jx\right)^{-s}ds\right\}\nonumber\\
&=2\left\{K_0\left(2\epsilon\sqrt{jx}\right)+K_0\left(2\overline{\epsilon}\sqrt{jx}\right)\right\},
\end{align*}
where we used \cite[p.~115, Formula (11.1)]{ober}
\begin{align}\label{mb}
K_z(x)=\frac{1}{2\pi i}\int_{(c_1)}\Gamma\left(\frac{s-z}{2}\right)\Gamma\left(\frac{s+z}{2}\right)2^{s-2}x^{-s}ds, \quad c_1>|\mathrm{Re}(z)|.
\end{align}
This proves our claim $\Omega_\K(jx)=\mathcal{R}(jx)$, which concludes our result.
\end{proof}

\begin{proof}[Corollary \textup{\ref{imaginary case}}][]
Let $r_1=0$ and $r_2=1$ in Theorem \ref{gen formula}. First we show that $\Omega_\K(x)$ reduces to $\mathcal{I}(x)$. 
Note that 
\begin{align}
\MeijerG*{3}{0}{0}{4}{-}{0,\frac{1}{2},\frac{1}{2};0}{\frac{x^2j^2}{16}}&=\frac{1}{2\pi i}\int_{(c)}\frac{\Gamma\left(s\right)\Gamma^2\left(s+\frac{1}{2}\right)}{\Gamma\left(1-s\right)}\left(\frac{x^2j^2}{16}\right)^{-s}ds\nonumber\\
&=\frac{1}{2\pi i}\int_{(c)}\frac{(\Gamma\left(s\right)\Gamma\left(s+\frac{1}{2}\right))^2}{\Gamma(s)\Gamma\left(1-s\right)}\left(\frac{x^2j^2}{16}\right)^{-s}ds\nonumber\\
&=4\frac{1}{2\pi i}\int_{(c)}\Gamma^2\left(2s\right)\sin(\pi s)\left(xj\right)^{-2s}ds,\nonumber
\end{align}
where in the last step we used  \eqref{duplication} and \eqref{reflection}. The change of variable $s\to s/2$ in the above expression yields
\begin{align}
\MeijerG*{3}{0}{0}{4}{-}{0,\frac{1}{2},\frac{1}{2};0}{\frac{x^2j^2}{16}}
&=2\frac{1}{2\pi i}\int_{(2c)}\Gamma^2\left(s\right)\sin(\pi s/2)\left(xj\right)^{-s}ds\nonumber\\
&=\frac{1}{i}\frac{1}{2\pi i}\int_{(2c)}\Gamma^2\left(s\right)\left(e^{\frac{\pi is}{2}}-e^{-\frac{\pi is}{2}}\right)\left(xj\right)^{-s}ds\nonumber\\
&=i\frac{1}{2\pi i}\left\{\int_{(2c)}\Gamma^2\left(s\right)\left(e^{\frac{\pi i}{2}}xj\right)^{-s}ds-\int_{(2c)}\Gamma^2\left(s\right)\left(e^{-\frac{\pi i}{2}}xj\right)^{-s}ds\right\}\nonumber\\
&=2i\left\{K_0\left(2\epsilon\sqrt{jx}\right)-K_0\left(2\overline{\epsilon}\sqrt{jx}\right)\right\},\nonumber
\end{align}
where we invoked \eqref{mb}. Combining this with \eqref{im quad kernel} and \eqref{omega definition} gives $\Omega_\K(jx)=\mathcal{I}(jx)$. This completes the proof of our result.
\end{proof}

Next, we establish Lerch's result and its consequential Corollary \ref{transcendental result}.
\begin{proof}[Theorem \textup{\ref{General Lerch}}][]
Upon setting $\alpha=\beta=\pi^d$, substituting $m$ with $2m+1$, and simplifying the expression, we arrive at \eqref{General Lerch eqn}.
\end{proof}

\begin{proof}[Corollary \textup{\ref{transcendental result}}][]
For a totally real field, the result of Klingen \cite{Klingen} and Siegel \cite{Siegel} (given in \eqref{KlingenSiegel} above) leads to the conclusion that the finite sum on the right-hand side of Equation \eqref{General Lerch eqn} is transcendental. As a result, this implies that either $\zeta_k(4m+3)$ or $\sum\limits_{n=1}^{\infty} \frac{V_{\K}(n)}{n^{4m+3}}\Omega_{\K}\left(\frac{(2\pi)^d n}{D_{\K}} \right)$ must also be transcendental.
\end{proof}

\section{Extended Eisenstein series over an arbitrary number field}

We first establish the symmetric form of the transformation formula for Eisenstein series $\mathcal{G}_{k,\K}(z)$, as stated in Theorem \ref{eisen trans symm}. Subsequently, we present the proof of Theorem \ref{eisen trans}.
\begin{proof}[Theorem \textup{\ref{eisen trans symm}}][]
By substituting $m$ with $-m$ in \eqref{gen formula eqn} and considering the case where $m\in\mathbb{N}$, we observe that the finite sum becomes empty and hence evaluates to zero. Further rearrangement of the terms leads to the proof of the desired result.

The second part of the proof follows as a straightforward consequence of \eqref{eisen trans symm eqn}, since for a non-totally real field, it is known that $\zeta_\K(1-2m)=0$.
\end{proof}

\begin{proof}[Theorem \textup{\ref{eisen trans}}][]
Note that \eqref{eisen trans symm eqn} can be re-written as
\begin{align*}
\alpha^m \left\{\sum_{n=1}^\infty\frac{V_\K(n)}{n^{1-2m}}\Omega_\K\left(\frac{2^d n \alpha}{D_\K}\right)+\frac{H}{2\mathcal{C}_\K}\zeta_\K(1-2m)\right\} = (-\beta)^m \left\{\sum_{n=1}^\infty\frac{V_\K(n)}{n^{1-2m}}\Omega_\K\left(\frac{2^d n \beta}{D_\K}\right)+\frac{H}{2\mathcal{C}_\K}\zeta_\K(1-2m)\right\}.
\end{align*}
Letting $\alpha=-\pi^diz$ and $\beta=\pi^di/z$ in the above equation and then simplifying, we are led to
\begin{align*}
\sum_{n=1}^\infty\frac{V_\K(n)}{n^{1-2m}}\Omega_\K\left(\frac{(2\pi)^di nz}{D_\K}\right)+\frac{H}{2\mathcal{C}_\K}\zeta_\K(1-2m)= z^{-2m} \left\{\sum_{n=1}^\infty\frac{V_\K(n)}{n^{1-2m}}\Omega_\K\left(\frac{(2\pi)^di n}{zD_\K}\right)+\frac{H}{2\mathcal{C}_\K}\zeta_\K(1-2m)\right\}.
\end{align*}
By utilizing the definition of the Eisenstein series $\mathcal{G}_{k,\K}(z)$ as given in \eqref{General Eisenstein}, we now conclude the proof of the theorem.
\end{proof}

\begin{proof}[Theorem \textup{\ref{series evaluation}}][]
After setting $\alpha=\pi^d=\beta$ and making the substitution $m\rightarrow 2m+1$, we can simplify the expression, which leads to the closed-form evaluation as expressed in \eqref{series evaluation eqn}. 
\end{proof}

We now supply a proof of the result for quasi-modular form.
\begin{proof}[Theorem \textup{\ref{quasi}}][]
The result in \eqref{quasi2} can be readily derived from \eqref{gen formula eqn} by setting $m=1$ and rearranging the terms accordingly.

For a proof of \eqref{quasi1}, we rewrite \eqref{quasi2} as
\begin{align*}
\alpha\left\{\sum_{n=1}^\infty n V_\K(n) \Omega_\K\left(\frac{2^d n \alpha}{D_\K}\right)+\frac{H\zeta_\K(-1)}{2\mathcal{C}_\K}\right\}= -\beta\left\{\sum_{n=1}^\infty n V_\K(n) \Omega_\K\left(\frac{2^d n \beta}{D_\K}\right)+\frac{H\zeta_\K(-1)}{2\mathcal{C}_\K}\right\} - \frac{\zeta_\K^2(0)}{\pi^{1-d}}.
\end{align*}
If we take $\alpha=-\pi^diz$ and $\beta=\pi^di/z$ in the above equation and simplify then it becomes
\begin{align*}
\sum_{n=1}^\infty n V_\K(n) \Omega_\K\left(\frac{(2\pi)^d n iz}{D_\K}\right)+\frac{H\zeta_\K(-1)}{2\mathcal{C}_\K}= -\frac{1}{z}\left\{\sum_{n=1}^\infty n V_\K(n) \Omega_\K\left(\frac{(2\pi)^d ni}{zD_\K}\right)+\frac{H\zeta_\K(-1)}{2\mathcal{C}_\K}\right\} + \frac{\zeta_\K^2(0)}{\pi i}.
\end{align*}
By employing the definition of $\mathcal{G}_{2,\K}(z)$ as given in \eqref{General Eisenstein} and performing simplifications, we obtain \eqref{quasi1}.
\end{proof}

The transformation formula for logarithmic of Dedekind eta function over fields is proved next.
\begin{proof}[Theorem \textup{\ref{Generalized Dedekind eta transformation}}][]
The argument presented here is similar to that given for the proof of Theorem \ref{gen formula}, and thus our explanation will be terse. We invoke Lemma \ref{inverse mellin transform of omega} so that
\begin{align}\label{L11}
\sum_{n=1}^\infty\frac{V_\K(n)\Omega_\K(ny)}{n}
&=\frac{1}{2\pi i}\int_{(c)}\zeta_\K(1+s)\zeta_\K(s)\frac{\Gamma(s)^{r_1+r_2}}{\Gamma(1-s)^{r_2}}\cos\left(\frac{\pi s}{2}\right)^{r_1-1}y^{-s}ds,
\end{align} 
Combining \eqref{equv fe1} with \eqref{L11}, we see that
\begin{align}\label{L(y) identity1}
\sum_{n=1}^\infty\frac{V_\K(n)\Omega_\K(ny)}{n}=\sqrt{D_\K}2^{r_2-d}\pi^{r_1+r_2-d}\frac{1}{2\pi i}\int_{(c)}\frac{\zeta_\K(1+s)\zeta_\K(1-s)}{\cos\left(\frac{\pi s}{2}\right)}\left(\frac{yD_\K}{(2\pi)^d}\right)^{-s}ds.
\end{align}
We now shift the line of integration from $c>1$ to $-3<\mu=\mathrm{Re}(s)<-1$ and consider the positively oriented contour formed by the line segments $[\mu-iT,c-iT],\ [c-iT,c+iT],\ [c+iT,\mu+iT],\ [\mu+iT,\mu-iT]$. Note that the integrand has double pole at $s=0$ and simple poles at $s=1$ and $s=-1$. Furthermore, using Lemma \ref{Convexity bound} and \eqref{strivert}, it can be shown that the integrals along the horizontal segments vanish as $T\to\infty$. By applying Cauchy's residue theorem along with \eqref{L(y) identity1}, we obtain
\begin{align}\label{gen cauchy1}
\sum_{n=1}^\infty\frac{V_\K(n)\Omega_\K(ny)}{n}=\sqrt{D_\K}2^{r_2-d}\pi^{r_1+r_2-d}\left(R_0+R_{1}+R_{-1}+I(y)\right),
\end{align}
where 
\begin{align}\label{defnI}
I(y):=\frac{1}{2\pi i}\int_{(\mu)}\frac{\zeta_\K(1+s)\zeta_\K(1-s)}{\cos\left(\frac{\pi s}{2}\right)}\left(\frac{yD_\K}{(2\pi)^d}\right)^{-s}ds.
\end{align}
We can evaluate the residues in \eqref{gen cauchy1} as
\begin{align}\label{residues gen1}
R_0&=H^2 \log\left( \frac{yD_\K}{(2\pi)^d} \right)\nonumber\\
R_{1}&=-\frac{2(2\pi)^d}{\pi yD_\K}\zeta_\K(2)\zeta_\K(0).\nonumber\\
R_{-1}&=\frac{2yD_\K}{\pi (2\pi)^d}\zeta_\K(0)\zeta_\K(2).
\end{align}
Employing the change of variable $s=-w$ in \eqref{defnI}, we have, for $1<c=\mathrm{Re}(s)<3$,
\begin{align}\label{I(y)}
I(y)&=\frac{1}{2\pi i}\int_{(c)}\frac{\zeta_\K(1-w)\zeta_\K(1+w)}{\cos\left(\frac{\pi w}{2}\right)}\left(\frac{yD_\K}{(2\pi)^d}\right)^{w}ds\nonumber\\
&=\left(\sqrt{D_\K}2^{r_2-d}\pi^{r_1+r_2-d}\right)^{-1}\sum_{n=1}^\infty\frac{V_\K(n)}{n}\Omega_\K\left(\frac{n(2\pi)^{2d}}{yD_\K^2}\right),
\end{align}
where we used \eqref{L(y) identity1}. Now equations \eqref{gen cauchy1} \eqref{residues gen1} and \eqref{I(y)} together yield
\begin{align}
\sum_{n=1}^\infty\frac{V_\K(n)\Omega_\K(ny)}{n}&=\sqrt{D_\K}2^{r_2-d}\pi^{r_1+r_2-d}\left\{H^2 \log\left( \frac{yD_\K}{(2\pi)^d} \right)-\frac{2(2\pi)^d}{\pi yD_\K}\zeta_\K(2)\zeta_\K(0)+ \frac{2yD_\K}{\pi (2\pi)^d}\zeta_\K(0)\zeta_\K(2)\right\} \nonumber\\
&\quad +\sum_{n=1}^\infty\frac{V_\K(n)}{n}\Omega_\K\left(\frac{n(2\pi)^{2d}}{yD_\K^2}\right).\nonumber
\end{align}
Thus we have
\begin{align}
\sum_{n=1}^\infty\frac{V_\K(n)\Omega_\K(ny)}{n} - \sum_{n=1}^\infty\frac{V_\K(n)}{n}\Omega_\K\left(\frac{n(2\pi)^{2d}}{yD_\K^2}\right)& = \frac{\sqrt{D_{\K}}H^2}{2^{r_1}(2\pi)^{r_2}} \log\left( \frac{yD_\K}{(2\pi)^d} \right)\nonumber\\
&\quad + \frac{2}{\pi} \frac{\sqrt{D_{\K}} \zeta_{\K}(0) \zeta_{\K}(2)}{2^{r_1}(2\pi)^{r_2}} \left( \frac{yD_\K}{(2\pi)^d} - \frac{(2\pi)^d}{yD_\K} \right).\nonumber
\end{align}
Letting $y=\frac{2^d\alpha}{D_\K}$ with $\alpha\beta=\pi^{2d}$ in the above equation we conclude our result.
\end{proof}

\vspace{.4cm}
\noindent
\textbf{Acknowledgements.}
The first author is an INSPIRE Faculty at IISER Kolkata funded by the grant DST/INSPIRE/04/2021/002753 of DST, Govt. of India. Partial work of this project was carried out when the second author was a post-doctoral fellow at the  Institute of Mathematics, Academia Scinica, Taiwan and wants to thank the institute for the academic support. The third author's research was supported by the Fulbright-Nehru Postdoctoral Fellowship Grant 2846/FNPDR/2022. All three authors sincerely thank the respective funding agencies for their support.

\end{document}